\numberwithin{equation}{section}
\theoremstyle{plain}
\newtheorem{theorem}[subsection]{Theorem}
\newtheorem{corollary}[subsection]{Corollary}
\newtheorem{proposition}[subsection]{Proposition}
\newtheorem{lemma}[subsection]{Lemma}
\newtheorem*{claim}{Claim}
\theoremstyle{definition}
\newtheorem*{problem}{Problem}
\newtheorem*{example}{Example}
\renewcommand{\leq}{\leqslant}
\renewcommand{\geq}{\geqslant}
\providecommand{\supp}{\mathop{\rm supp}\nolimits}
\providecommand{\Prog}{\mathop{\rm Prog}\nolimits}
\providecommand{\Spec}{\mathop{\rm Spec}\nolimits}
\newcommand{\wh}{\widehat}
\newcommand{\E}{\mathbb{E}}
\newcommand{\Z}{\mathbb{Z}}
\newcommand{\N}{\mathbb{N}}
\newcommand{\C}{\mathbb{C}}
\begin{document}

\title{Chowla's cosine problem}

\author{Tom Sanders}
\address{Department of Pure Mathematics and Mathematical Statistics\\
University of Cambridge\\
Wilberforce Road\\
Cambridge CB3 0WA\\
England } \email{t.sanders@dpmms.cam.ac.uk}

\begin{abstract}
Suppose that $G$ is a discrete abelian group and $A \subset G$ is a finite symmetric set. We show two main results. 
\begin{enumerate}
\item Either there is a set $\mathcal{H}$ of $O(\log^c|A|)$ subgroups of $G$ with $|A \triangle \bigcup \mathcal{H}| = o(|A|)$ where $\bigcup \mathcal{H}=\bigcup_{H \in \mathcal{H}}{H}$, or there is a character $\gamma \in \wh{G}$ such that $-\wh{1_A}(\gamma) =\Omega(\log^c|A|)$ where $c>0$ is the same absolute constant.
\item If $G$ is finite and $|A| = \Omega(|G|)$ then either there is a subgroup $H \leq G$ such that $|A \triangle H| = o(|A|)$, or there is a character $\gamma \in \wh{G}$ such that $-\wh{1_A}(\gamma) =\Omega(|A|^{\Omega(1)})$.
\end{enumerate}
\end{abstract}

\maketitle

\section{Introduction}

Suppose that $G$ is an abelian group, which we shall think of as discrete. We write $\wh{G}$ for the dual group, that is the compact abelian group of homomorphisms $\gamma:G \rightarrow S^1$ and denote the Haar probability measure on $\wh{G}$ by $\mu$. In general, if $S$ is a compact open subset of a locally compact abelian group then $\mu_S$ denotes Haar measure restricted to $S$ and normalized to be a probability measure.

The Fourier transform is the map $\wh{.}:\ell^1(G) \rightarrow L^\infty(\wh{G})$ which takes $f \in \ell^1(G)$ to $\wh{f}$ determined by
\begin{equation*}
\wh{f}(\gamma):=\sum_{x \in G}{f(x)\overline{\gamma(x)}}.
\end{equation*}
It is useful to use the Fourier transform to define the space $A(G)$ of functions $f \in \ell^1(G)$ endowed with the norm
\begin{equation*}
\|f\|_{A(G)}:=\|\wh{f}\|_{L^1(\wh{G})}=\int{|\wh{f}(\gamma)|d\gamma},
\end{equation*}
where the integration is, of course, with respect to the Haar probability measure on $\wh{G}$. We have the Fourier inversion formula, Plancherel's theorem and Parseval's theorem which we use liberally and without further mention; the classic text \cite{WR} of Rudin includes all the details. 

Now, suppose that $A$ is a finite symmetric subset of $G$. Since $A$ is symmetric it is easy to see that $\wh{1_A}$ is real-valued, and it becomes natural to ask how positive or negative it may get. The former question is trivial: a quick calculation reveals that $\wh{1_A}(0_{\wh{G}})=|A|$, and since trivially $\|\wh{1_A}\|_{L^\infty(\wh{G})} \leq |A|$, we see that $\wh{1_A}$ gets as large as it possibly could. The latter is not so simple: writing
\begin{equation*}
M_G(A):=\sup_{\gamma \in \wh{G}}{-\wh{1_A}(\gamma)},
\end{equation*}
we want a lower bound on $M_G(A)$ in terms of $|A|$.

In the paper \cite{SC}, Chowla asked for such a lower bound on $M_{\Z}(A)$. By a simple averaging argument  and the Littlewood conjecture (resolved independently by Konyagin \cite{SVK} and McGehee, Pigno and Smith \cite{MPS}) one gets that $M_\Z(A) = \Omega(\log |A|)$. Prior to the resolution of the Littlewood conjecture some progress had been made for generic $A$ by Roth in \cite{KFRCos}. However, the logarithmic barrier was first breached by Bourgain in \cite{JBCos}, using a method which was later refined by Ruzsa, \cite{IZRCos}, to give the following result.
\begin{theorem}[{\cite[Theorem 2]{IZRCos}}]\label{thm.ruzsastheorem}
Suppose that $A$ is a finite, non-empty symmetric set of integers. Then
\begin{equation*}
M_{\Z}(A) = \exp(\Omega(\sqrt{\log |A|})).
\end{equation*}
\end{theorem}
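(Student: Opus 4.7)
The starting point is to pass from a bound on $M_\Z(A)$ to one on the Wiener-algebra norm of $1_A$. After translating so that $0 \in A$ (at the cost of modifying $A$ by a single point, absorbed in constants), Fourier inversion gives $\int \wh{1_A}\,d\mu = 1$, and writing $|\wh{1_A}| = \wh{1_A} + 2(\wh{1_A})_-$ yields
\begin{equation*}
\|1_A\|_{A(\Z)} = 1 + 2\int_{\wh{\Z}} (\wh{1_A})_-\,d\mu \leq 1 + 2M_\Z(A).
\end{equation*}
The Konyagin / McGehee--Pigno--Smith resolution of the Littlewood conjecture gives $\|1_A\|_{A(\Z)} \gg \log|A|$ and hence immediately $M_\Z(A) \gg \log |A|$. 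Breaking past this single logarithm is the whole content of the theorem.

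The strategy, following Bourgain and Ruzsa, is to apply the Wiener-norm bound not to $1_A$ alone but to auxiliary functions obtained from $1_A$ by iterated convolution. Concretely, since $\sigma := \wh{1_A} + M_\Z(A) \geq 0$ on $\wh{\Z}$, each power $\sigma^k$ is a non-negative function on $\wh{\Z}$, and the inverse Fourier transforms $\bigl(1_A + M_\Z(A)\delta_0\bigr)^{*k}$ are non-negative measures supported on $kA$. One obtains an upper bound on $\int \sigma^k\,d\mu$ by induction, using the pointwise estimate $\|1_A + M_\Z(A)\delta_0\|_{\ell^\infty(\Z)} \leq 1 + M_\Z(A)$ and the $L^1$ data. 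In parallel one applies the Littlewood bound to a carefully chosen level set of $\sigma^k$, whose support is a thinning of $kA$ that, by Pl\"unnecke-type growth in the torsion-free setting $\Z$, is still combinatorially large. Combining the two estimates produces a family of inequalities roughly of the shape
\begin{equation*}
\bigl(CM_\Z(A)\bigr)^{k} \gg \log|A|,
\end{equation*}
each valid up to a combinatorial threshold on $k$. Optimizing $k \asymp \sqrt{\log|A|}/\log M_\Z(A)$ forces $\log M_\Z(A) \gg \sqrt{\log|A|}$, which is the desired bound.

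\textbf{Main obstacle.} The crux is producing, at each scale $k$, the ``right'' auxiliary set: its support must be combinatorially large enough that Littlewood's theorem supplies a non-trivial lower bound, while its $A(\Z)$-norm must still be polynomially controlled in $M_\Z(A)$. Naive choices fail in both directions: the indicator $1_{kA}$ has no useful $A(\Z)$-bound in terms of $M_\Z(A)$, whereas too-thin level sets of $\sigma^k$ have supports too small for Littlewood to bite. Ruzsa's refinement over Bourgain's original argument consists precisely in tuning the level-set threshold to the scale $k$ so that the analytic and combinatorial inputs are consumed in equal strength, pushing the optimum from $k \asymp (\log|A|)^{1/3}$ (which is what Bourgain's less balanced iteration gives) up to $k \asymp \sqrt{\log|A|}$. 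Making this bookkeeping quantitative, and in particular verifying the sumset growth used at each step, is where essentially all the technical work lies.
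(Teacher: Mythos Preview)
This theorem is not proved in the paper; it is quoted with attribution to Ruzsa as a known result, serving as background and motivation for the paper's own Theorems~\ref{thm.maintheorem} and~\ref{thm.densityindependentmaintheorem}. There is therefore no proof in the paper against which to compare your proposal.

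Assessed on its own merits, your sketch correctly identifies the architecture of the Bourgain--Ruzsa approach: the passage from $M_\Z(A)$ to $\|1_A\|_{A(\Z)}\leq 1+2M_\Z(A)$, the McGehee--Pigno--Smith/Konyagin bound as the analytic input, and amplification by working with convolution powers of $1_A+M_\Z(A)\delta_0$ so that the associated transform $\sigma^k$ remains non-negative. The shape of the final optimisation in $k$ is also right.

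Where the proposal is thin is exactly where you flag it. The sentence ``apply the Littlewood bound to a carefully chosen level set of $\sigma^k$'' hides the entire difficulty and, as phrased, is not quite correct: $\sigma^k$ lives on $\T$, so you presumably mean a level set of $g_k=(1_A+M_\Z(A)\delta_0)^{*k}$ on $\Z$; but there is no general mechanism by which a level set of a function with small $A(\Z)$-norm itself has small $A(\Z)$-norm, so Littlewood does not apply to $1_{B_t}$ for free. Ruzsa's iteration does not proceed via level sets in this way; it produces at each stage an explicit auxiliary \emph{set} whose indicator has algebra norm polynomially controlled in $M_\Z(A)$, and establishing that control is the substance of the argument. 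The invocation of ``Pl\"unnecke-type growth'' is likewise a placeholder rather than a step. As written, then, you have a faithful outline of the strategy but not a proof: the crucial construction at each scale remains to be supplied.
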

This theorem is the best known to date; in the other direction there are sets $A$ with $M_{\Z}(A) = O(\sqrt{|A|})$, but nothing better.

Recently, Green and Konyagin in \cite{BJGSVK} began work extending Littlewood's conjecture to abelian groups other than $\Z$. In both Littlewood's conjecture and Chowla's problem there is a simple obstacle to the most obvious extension: if $H$ is a finite subgroup of $G$ then $\|1_H\|_{A(G)}=1$ and $M_G(H)=0$. It turns out that in a number of cases this is really the only barrier.

In their work Green and Konyagin addressed the discrete analogue of the Littlewood conjecture and their result can be used in the same way as the Littlewood conjecture in $\Z$ to get that $M_{\Z/p\Z}(A) = \log^{\Omega(1)} |A|$ if $|A|=(p+1)/2$. We are able to do somewhat better -- even than the obvious analogue of Theorem \ref{thm.ruzsastheorem} -- and shall show the following.
\begin{theorem}\label{thm.primetheorem}
Suppose that $p$ is a prime and $A \subset \Z/p\Z$ is symmetric and of size $(p+1)/2$. Then 
\begin{equation*}
M_{\Z/p\Z}(A) = \Omega(p^{\Omega(1)}).
\end{equation*}
\end{theorem}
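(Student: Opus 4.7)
My plan is to argue by contradiction: assume $M_{\Z/p\Z}(A)\leq K:=p^{\varepsilon}$ for a small absolute $\varepsilon>0$, and derive a contradiction using the density $|A|=(p+1)/2>p/2$ together with the fact that $\Z/p\Z$ has only trivial subgroups (the obstacle that Green--Konyagin identify as the only barrier to extending Littlewood-type bounds to arbitrary abelian groups).

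First I would extract a large positive Fourier coefficient. Write $h=\wh{1_A}$ (real-valued, because $A$ is symmetric) and decompose $h=h_{+}-h_{-}$ with $h_{\pm}\geq 0$; the assumption reads $h_{-}(\gamma)\leq K$ pointwise. Parseval gives $\sum_{\gamma\neq 0}h(\gamma)^{2}=|A|(p-|A|)=(p^{2}-1)/4$, while the cap on $h_{-}$ gives $\sum h_{-}^{2}\leq K^{2}p$ and $\sum h_{-}\leq Kp$. Combined with the easy identity $\sum_{\gamma\neq 0}h(\gamma)=p\cdot 1_A(0)-|A|=O(p)$ this forces $\sum h_{+}(\gamma)^{2}=\Omega(p^{2})$ and $\sum h_{+}(\gamma)=O(Kp)$, so by the trivial inequality $\sum x^{2}\leq(\max x)(\sum x)$ there is a nonzero character $\gamma^{*}$ with $\wh{1_A}(\gamma^{*})\geq cp/K=cp^{1-\varepsilon}$.

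Second I would try to convert this large Fourier coefficient into structural information about $A$. If one could show that $A$ is close, in symmetric difference, to an approximate subgroup or a Bohr set of bounded dimension, then by primality of $p$ that structure is essentially trivial, so $A$ would be forced to coincide with $\{0\}$ or $\Z/p\Z$ up to $o(|A|)$ error, contradicting $|A|=(p+1)/2$. Alternatively, if $A$ can be shown to be close to a genuine arithmetic progression of length $(p+1)/2$, then dilating to reduce to the symmetric interval $I=\{-(p-1)/4,\dots,(p-1)/4\}$ and computing the Dirichlet kernel $\wh{1_I}$ at a frequency $\approx 3$ (as in the explicit sample computation suggested by the introduction) produces a negative Fourier coefficient of size $\Omega(p)$, which together with $|A\triangle I|=o(p)$ transfers to a negative coefficient of $\wh{1_A}$ of size $\Omega(p)$, a contradiction for $\varepsilon<1$.

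The main obstacle is the second step. The correlation $\wh{1_A}(\gamma^{*})/|A|\sim p^{-\varepsilon}$ is only subpolynomially small, which places this problem at the edge of what the standard structural theorems can handle: Chang's theorem applied directly to the $\alpha$-spectrum with $\alpha\sim p^{-\varepsilon}$ yields a Bohr set of dimension $\sim p^{2\varepsilon}$, far too large to use as a substitute for a trivial subgroup. To close the argument one must iterate a finer structural tool---Croot--Sisask almost-periodicity paired with a density-increment, or Chang combined with Freiman-type theorems in $\Z/p\Z$---and show that the iteration terminates in a bounded number of steps with approximation error $o(|A|)$. Making this quantitative enough to yield a polynomial-in-$p$ (rather than merely logarithmic) saving is the core technical difficulty, and is presumably where the paper does its main work.
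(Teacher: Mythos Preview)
Your step 1 is fine, and even echoes something the paper does use: once $A$ is not a subgroup the trivial estimate gives $M_G(A)\geq 1/2$, and then Fourier inversion plus the pointwise bound $|\wh{1_A}|\leq \wh{1_A}+2M_G(A)$ yields $\|1_A\|_{A(G)}\leq 1+2M_G(A)\leq 4M_G(A)$. But step 2 is a genuine gap, and the route you sketch (Chang, Fre{\u\i}man, Croot--Sisask, density increment) is not how the gap closes. As you yourself observe, those tools give structure of dimension $p^{O(\varepsilon)}$, which is useless here; the paper does not overcome that obstacle---it bypasses it entirely.

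The idea you are missing is elementary but decisive: the boolean identity $1_A^r=1_A$ gives $\wh{1_A}^{(r)}=\wh{1_A}$ for every $r\in\N$, where $f^{(r)}$ is $r$-fold convolution on $\wh{G}$. Set $f:=\max(\wh{1_A},0)$, so that $\|f^{(r)}-\wh{1_A}\|_\infty\leq rM_G(A)\|1_A\|_{A(G)}^{r-1}$ by induction. If some $\gamma$ has $0\leq\wh{1_A}(\gamma)\leq|A|/32$, expand the nonnegative quantity $(f-f(\gamma)1_{\{\gamma\}})^{(r)}(\gamma)$ by the binomial theorem with $r\asymp\alpha^{-1}$; the dominant terms give essentially $\wh{1_A}(\gamma)\bigl(1-r\alpha\bigr)$, which is negative, so the error term must be large, forcing $M_G(A)\geq \alpha\wh{1_A}(\gamma)/16\|1_A\|_{A(G)}^{2\alpha^{-1}+1}$. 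Since $\|1_A\|_{A(G)}=O(M_G(A))$ and $\alpha\asymp 1$, this is already polynomial in $p$ unless no such intermediate $\gamma$ exists. A short separate dichotomy then shows that if there is no intermediate $\gamma$ the large spectrum $\Spec_{1/32}(A)$ is a subgroup $V\leq\wh{G}$; averaging $1_A$ over $V^\perp$ and applying the trivial estimate of \S\ref{sec.trivial} finishes. No Bohr sets, no approximate groups, no almost-periodicity.

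So the proof you were aiming to construct lives entirely on the Fourier side and uses nothing heavier than convolution inequalities and the binomial theorem; the structural machinery you reach for is reserved in the paper for the density-independent Theorem~\ref{thm.densityindependentmaintheorem}, where it yields only polylogarithmic rather than polynomial bounds.
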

In fact, through careful accounting one can arrange for the $\Omega(1)$ constant to be $1/3$. For comparison Spencer showed in \cite{JS} that there are sets $A \subset \Z/p\Z$ of size $(p+1)/2$ with $\sup_{\gamma \neq 0}{|\wh{1_A}(\gamma)|} = O(p^{1/2})$ and hence $M_{\Z/p\Z}(A) = O(p^{1/2})$. It seems interesting to try to close this gap.

As is turns out we shall prove the following generalization of the above result. To preserve the strength of the bound we impose the additional constraint that $A$ has density bounded away from $0$.
\begin{theorem}\label{thm.maintheorem}
Suppose that $G$ is a finite abelian group and $A \subset G$ is symmetric with $|A| = \Omega(|G|)$. Then there is a subgroup $H \leq G$ such that
\begin{equation*}
M_G(A) = |A \triangle H|^{\Omega(1)}.
\end{equation*}
\end{theorem}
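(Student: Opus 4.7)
The plan is to convert the hypothesis ``$M := M_G(A)$ is small'' into strong algebra-norm control on $1_A$, and then combine this with the density assumption to locate a close subgroup.

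\emph{Step 1: An algebra norm bound.} I would first show that
\[
\|1_A\|_{A(G)} \leq 1 + 2M.
\]
The key observation is that $\wh{1_A}(\gamma) + M \geq 0$ for every $\gamma \in \wh{G}$, so the non-negative function $\wh{1_A} + M$ on $\wh{G}$ is exactly the Fourier transform of $1_A + M\delta_0$. Its $L^1(\wh{G})$-norm therefore equals its own mean, and by Fourier inversion at $0$ this is $(1_A + M\delta_0)(0) \leq 1+M$. The triangle inequality, together with $\|\delta_0\|_{A(G)} = 1$, now yields the claim.

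\emph{Step 2: From algebra norm to subgroup.} Setting $K := 1 + 2M = O(M)$, the task reduces to finding $H \leq G$ with $|A \triangle H| \leq K^{O(1)}$. The natural candidate arises from the one-sided spectrum $\Lambda := \{\gamma : \wh{1_A}(\gamma) \geq |A|/2\}$; Markov combined with the density assumption $|A|=\Omega(|G|)$ gives $|\Lambda| = O(K)$. I would then try to show, using the structural theory of sets with small $A(G)$-norm (Cohen's idempotent theorem and its quantitative refinements, plus a Chang-type argument exploiting the density), that $\Lambda$ is contained in (or very close to) a subgroup $L \leq \wh{G}$ of size $K^{O(1)}$. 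Taking $H := L^\perp$, one has $\wh{1_H} = |H|\,1_L$, and the algebra-norm bound lets us compare $1_A$ with $1_H$ through Parseval:
\[
|A \triangle H| = \|1_A - 1_H\|_2^2 = \frac{1}{|G|}\sum_\gamma\bigl|\wh{1_A}(\gamma) - \wh{1_H}(\gamma)\bigr|^2,
\]
splitting the sum into $\gamma \in L$ and $\gamma \notin L$ and bounding each using Step 1 together with the location of $\Lambda$ inside $L$.

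\emph{Main obstacle.} The real work lies in Step 2. The available quantitative idempotent theorems (such as that of Green--Sanders) only deliver tower-type or double-exponential dependence on $K$, whereas the theorem demands a polynomial dependence $|A \triangle H| \leq M^{O(1)}$. The density hypothesis $|A| = \Omega(|G|)$ must therefore be exploited decisively, presumably by an iterative scheme which, at each step, either locates a very negative Fourier coefficient (ending the argument) or passes to a denser subset inside a proper subgroup or Bohr set, driving the algebra norm closer to $1$ with only polynomial losses until a genuine subgroup indicator emerges. Making this iteration efficient is the heart of the matter.
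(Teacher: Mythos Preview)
Your Step~1 is correct and essentially identical to the paper's first move (equation~(4.1) there): from $\wh{1_A}\geq -M$ one gets $\|1_A\|_{A(G)}\leq 1+2M$, so small $M$ forces small algebra norm.

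The genuine gap is Step~2. You correctly diagnose that any off-the-shelf structural result (quantitative idempotent, Chang-type bounds) gives dependence far worse than polynomial in $M$, and your sketch of a density-increment iteration is too vague to count as a plan; in particular nothing in it explains how the losses stay polynomial. The paper's mechanism is quite different and rests on an idea you have not mentioned: the \emph{boolean convolution identity} $1_A^r=1_A$, which on the Fourier side reads $\wh{1_A}^{(r)}=\wh{1_A}$ for every $r$. This is used twice. First (Lemma~3.1), with $r=2$ it shows that either $\Spec_\epsilon(A)$ is already a subgroup of $\wh{G}$, or $M_G(A)$ is large, or there is a character $\gamma$ with $\wh{1_A}(\gamma)$ in an intermediate range $[\epsilon^2\alpha|A|/2,\epsilon|A|)$. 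Second (Lemma~3.2, the heart of the argument), for such an intermediate $\gamma$ one expands $(f-f(\gamma)1_{\{\gamma\}})^{(r)}\geq 0$ with $f=\max(\wh{1_A},0)$ and $r\asymp\alpha^{-1}$ via the binomial theorem; the identity $\wh{1_A}^{(r)}=\wh{1_A}$ collapses all the high convolution powers back to $\wh{1_A}$, forcing
\[
M_G(A)\ \geq\ \frac{\alpha\,\wh{1_A}(\gamma)}{16\,\|1_A\|_{A(G)}^{2\alpha^{-1}+1}}.
\]
Since $\alpha=\Omega(1)$ the exponent $2\alpha^{-1}+1$ is $O(1)$, and combined with Step~1 this is exactly the polynomial bound you need. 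When instead $\Spec_\epsilon(A)$ is a subgroup $V$, one shows $\Spec_\nu(A)=V$ as well for a suitably smaller $\nu$, so $1_A\ast\mu_{V^\perp}$ is $\ell^1$-close to $1_A$; a trivial ``sum-of-three-characters'' argument (Lemma~2.2) then either produces a large negative Fourier coefficient or the desired subgroup $H$.

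In short, the missing idea is to exploit $1_A^r=1_A$ for large $r$ on the Fourier side; this, not any idempotent or increment machinery, is what buys the polynomial dependence under the density hypothesis.
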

Note that if $p$ is prime then the only subgroups of $\Z/p\Z$ are trivial, whence if $|A|=(p+1)/2$ we se that $|A \triangle H| = \Omega(p)$ and we have Theorem \ref{thm.primetheorem}.

Now, for comparison, if $A$ is the union of a (large) finite subgroup $H$ and $K$ other points, then $M_G(A) \leq K$ and $\min_{H' \leq G}{|A \triangle H'|} =K$, so the result is best possible up to the power. In fact, in many cases this has to be smaller than $1$ as noted for $\Z/p\Z$ above.

The main defect of the above theorem is that $A$ is required to have density bounded away from $0$ and so it has no bearing on Chowla's original problem. Our next result recovers the situation although at considerable cost to the bound.
\begin{theorem}\label{thm.densityindependentmaintheorem}
Suppose that $G$ is an abelian group and $A \subset G$ is a non-empty symmetric set. Then there is a set $\mathcal{H}$ of subgroups of $G$ with $|\mathcal{H}| =O(M_G(A))$ such that
\begin{equation*}
M_G(A) = \log^{\Omega(1)} |A \triangle \bigcup{\mathcal{H}}|,
\end{equation*}
where $\bigcup{\mathcal{H}}=\bigcup_{H \in \mathcal{H}}{H}$.
\end{theorem}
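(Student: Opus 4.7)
The plan is to apply Theorem~\ref{thm.maintheorem} iteratively, extracting a single subgroup from $A$ at each stage and terminating when the residual set is small. Write $M:=M_G(A)$. Set $A_1:=A$ and $A_{i+1}:=A_i\setminus H_i$ with the subgroups $H_i$ yet to be specified; the aim is to arrive after $k=O(M)$ stages at a set $A_{k+1}$ of size at most $\exp(M^{O(1)})$.

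To select $H_i$ at stage $i$, I would first locate a finite subgroup $K\leq G$ inside which $A_i$ has density bounded below, by means of a density-increment argument: termination not yet having occurred, $A_i$ retains substantial mass, and the bound on its negative Fourier transform forces, via a Bogolyubov/Freiman-type argument on the level sets of $\wh{1_{A_i}}$, concentration inside a Bohr set of controlled rank, which is then quantised into a genuine subgroup $K$ of bounded size with $|A_i\cap K|\geq c|K|$. Once $K$ is in hand, apply Theorem~\ref{thm.maintheorem} to $A_i\cap K\subset K$ to produce a subgroup $H_i\leq K$ with $|(A_i\cap K)\triangle H_i|\leq M_K(A_i\cap K)^{O(1)}\leq M^{O(1)}$, where the last inequality uses that $M_K$ is controlled by $M_G$ modulo a term that is small once $A_i\setminus K$ is small.

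The invariant to maintain is $M_G(A_i)\leq CM$. Since $H_i$ is a subgroup we have $\wh{1_{H_i}}\geq 0$, and the identity
\begin{equation*}
-\wh{1_{A_{i+1}}}(\gamma)=-\wh{1_{A_i}}(\gamma)+\wh{1_{A_i\cap H_i}}(\gamma)
\end{equation*}
reduces the problem to bounding the positive contribution of $\wh{1_{A_i\cap H_i}}$, which is at most $|H_i|\leq M^{O(1)}$ and may be absorbed into $C$. Terminating after $k=O(M)$ stages and summing symmetric differences gives
\begin{equation*}
\Big|A\triangle\bigcup\mathcal{H}\Big|\leq|A_{k+1}|+\sum_{i\leq k}|(A_i\cap K_i)\triangle H_i|\leq\exp(M^{O(1)}),
\end{equation*}
and taking logarithms yields $M\geq(\log|A\triangle\bigcup\mathcal{H}|)^{\Omega(1)}$ as required.

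The main obstacle is the density-increment step: extracting a genuine finite subgroup $K$, not merely a Bohr set or coset thereof, on which $A_i$ has density $\Omega(1)$, with bounds tight enough that the overall iteration closes in $O(M)$ steps. This calls for a careful marriage of additive-combinatorial structure theorems with the Fourier-analytic constraint $M_G(A_i)\leq CM$; one must sidestep the polylogarithmic losses typical of Freiman-type machinery, since such losses would compound across iterations and spoil the final bound. A secondary technical point is the initial reduction to a setting admitting finite subgroups when $G$ is torsion-free, handled by a standard Freiman embedding of $A$ into a cyclic group of prime order.
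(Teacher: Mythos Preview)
There is a genuine gap in the invariant-maintenance step. You claim that ``the positive contribution of $\wh{1_{A_i\cap H_i}}$ \dots\ is at most $|H_i|\leq M^{O(1)}$'', but this is inconsistent with your own construction: $H_i$ approximates $A_i\cap K$ to within $M^{O(1)}$ elements and $|A_i\cap K|\geq c|K|$, so $|H_i|$ must be \emph{large}, comparable to $|K|$. Once that is corrected the invariant $M_G(A_{i+1})\leq CM$ simply fails in general. For a clean example take $G=(\Z/p\Z)^2$ and $A_i=G$, so $M_G(A_i)=0$; if your density-increment locates $K=\Z/p\Z\times\{0\}$ (in which $A_i$ is certainly dense) then $H_i=K$ and $A_{i+1}=G\setminus K$ has $M_G(A_{i+1})=|K|=p$. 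Nothing in your sketch prevents the density-increment from choosing too small a $K$, and this is precisely the delicate point. Relatedly, the Freiman-embedding remark does not rescue the torsion-free case, since subgroups of the target $\Z/p\Z$ do not pull back to subgroups of $G$; and you give no argument for termination in $O(M)$ steps.

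The paper sidesteps all of this by never modifying $A$. It builds $\mathcal{H}$ via an iteration lemma (Lemma~\ref{lem.mainitlem}): Balog--Szemer\'edi--Fre{\u\i}man combined with a Bourgain-system localisation of the Green--Konyagin continuity argument (Corollary~\ref{cor.kyt}) produces a finite subgroup $V$ for which $1_A\ast\mu_V$ is $(\epsilon,\infty)$-almost boolean; the level set $H_0=\{x:1_A\ast\mu_V(x)>1/2\}$ is then shown to be a subgroup via Lemma~\ref{lem.trivialdensity2} (or else $M_G(A)$ is already large), and Theorem~\ref{thm.maintheorem} is applied to $A\cap H_0$ inside $H_0$ using the averaging identity $(1_{A\cap H_0})^\wedge=\wh{1_A}\ast\mu_{H_0^\perp}$, which transfers any large negative coefficient back to $\wh{1_A}$ with no need to control $A\setminus H_0$. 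The bound $|\mathcal{H}|=O(M_G(A))$ comes from an algebra-norm argument: the sets $H^\perp\setminus\bigcup_{H'\in\mathcal{H},\,H'\neq H}H'^\perp$ are disjoint and each carries $A(G)$-mass at least $1/4$, so $|\mathcal{H}|\leq 4\|1_A\|_{A(G)}=O(M_G(A))$.
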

Note that we have to allow unions of subgroups. Consider, for example, the case of $H\cup H'$ where $H$ and $H'$ are subgroups with  $H \cap H'=\{0_G\}$. It is easy to show that $M_G(H \cup H') \leq 1$.

We close this introduction with an outline of the paper. In \S\ref{sec.trivial} we illustrate some trivial arguments for showing when $M_G(A)$ is non-zero; these trivial arguments turn out to be central to our later work. Then, in \S\S\ref{sec.boolspec}\verb!&!\ref{sec.maintheorem} we prove Theorem \ref{thm.maintheorem}, through some analysis of the spectrum of boolean functions.

The remainder of the paper is then devoted to proving Theorem \ref{thm.densityindependentmaintheorem} (which uses Theorem \ref{thm.maintheorem}) in \S\S\ref{sec.bs}--\ref{sec.maintheorem2}. To do this we recall the technology of Bourgain systems from \cite{BJGTS2}, although this is entirely contained in \S\S\ref{sec.bs}\verb!&!\ref{sec.tools} and may be treated as a black box from the perspective of the rest of the paper. Finally, \S\ref{sec.conrem} closes with some concluding remarks.

\section{A trivial estimate}\label{sec.trivial}

It is instructive for us to begin by proving a weak version of our main results:
\begin{proposition}\label{prop.vsimple}
Suppose that $G$ is an abelian group and $A \subset G$ is a non-empty symmetric set. Then there is a subgroup $H \leq G$ such that
\begin{equation*}
M_G(A) \geq \frac{1}{2}1_{\{x>0\}}(|A \triangle H|) = \begin{cases}\frac{1}{2} & \textrm{ if } A \neq H;\\ 0 & \textrm{ if }A = H.\end{cases}.\end{equation*}
\end{proposition}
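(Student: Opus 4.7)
The plan is to prove the contrapositive: if $M_G(A) < 1/2$ then $A$ must be a subgroup. Given this, the proposition follows by taking $H = A$ in the subgroup case (so that $|A \triangle H| = 0$) and taking $H$ arbitrary otherwise (when the bound of $1/2$ holds). The key observation is that $M_G(A) < 1/2$ is equivalent to $2\wh{1_A}(\gamma) + 1 > 0$ for every $\gamma \in \wh G$, i.e.\ the function $f := 2 \cdot 1_A + \delta_0$ on $G$ is strictly positive definite. So for any pairwise distinct $x_1, \dots, x_n \in G$ and any nonzero $v \in \R^n$,
\[
\sum_{i, j} v_i v_j f(x_i - x_j) > 0,
\]
and the strategy is to exhibit choices of $(x_i)$ and $(v_i)$ that violate this inequality whenever $A$ is not a subgroup.

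If $0 \notin A$, pick any $a \in A$ and test at $x_1 = 0$, $x_2 = a$ with $v = (1, -1)$: since $f(0) = 1$ but $f(\pm a) = 2$, the quadratic form equals $-2$, a contradiction. So I may assume $0 \in A$. Since $A$ is not a subgroup, there exist $u, w \in A$ with $u + w \notin A$, and since $0 \in A$ one checks $u, w, u + w \neq 0$ (so in particular $u \neq -w$).

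The first sub-case is that such a bad pair can be found with $y := u \neq w =: z$, in which case $0, y, z, y + z$ are pairwise distinct. Using $f(\pm y) = f(\pm z) = 2$ and $f(\pm(y + z)) = 0$, testing the $4 \times 4$ matrix $[f(x_i - x_j)]$ against $v = (1, -1, -1, 1)$ gives a form equal to $-4 + 4 \cdot 1_A(y - z) \leq 0$, contradicting strict positive definiteness. The remaining sub-case is when every bad pair is diagonal, i.e.\ there is $a \in A$ with $2a \notin A$; then $a$ has order at least $4$ (order $2$ would give $2a = 0 \in A$, order $3$ would give $2a = -a \in A$), so $0, a, 2a, 3a$ are pairwise distinct. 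Testing the $4 \times 4$ matrix on this arithmetic progression against $v = (1, -1, 1, -1)$ gives a form equal to $-4 \cdot 1_A(3a) \leq 0$, again a contradiction.

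The main obstacle is spotting the two natural bad configurations. The ``parallelogram'' $\{0, y, z, y + z\}$ with alternating test vector $(1, -1, -1, 1)$ detects a missing sum of two distinct $A$-elements; this degenerates precisely when only doubles are missing from $A + A$, and the length-four arithmetic progression $\{0, a, 2a, 3a\}$ with test vector $(1, -1, 1, -1)$ is the natural substitute there. Once these configurations are fixed, each contradiction reduces to a short quadratic-form computation on a $4 \times 4$ matrix whose entries are $0$, $2$, or $3$.
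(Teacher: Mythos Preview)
Your proof is correct and in fact recovers the constant $1/2$ stated in the proposition. The paper's own route is the specialization of Lemma~2.3 to $V=\{0_G\}$, $f=1_A$, $\epsilon=0$: given $x,y\in A$ with $x+y\notin A$, one evaluates
\[
\langle 1_A,\ (\delta_0-(\delta_x+\delta_{-x})/2)\ast(\delta_0-(\delta_y+\delta_{-y})/2)\rangle \leq -\tfrac{1}{2},
\]
and then Plancherel against the nonnegative weight $(1-\Re\gamma(x))(1-\Re\gamma(y))\leq 4$ yields $\inf_\gamma \wh{1_A}(\gamma)\leq -1/8$. Your sub-case~2a is, after unpacking, exactly this test: the trigonometric polynomial $P(\gamma)=1-\gamma(y)-\gamma(z)+\gamma(y+z)=(1-\gamma(y))(1-\gamma(z))$ satisfies $|P|^2=4(1-\Re\gamma(y))(1-\Re\gamma(z))$, so the two quadratic forms agree up to the additive shift coming from $\wh f=2\wh{1_A}+1$.

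The genuine difference is in how the constant is extracted. By folding the $+1$ into $\wh f$ and using that $\wh f>0$ everywhere (hence bounded below on the compact $\wh G$), you avoid the crude bound $(1-\Re\gamma(x))(1-\Re\gamma(y))\leq 4$ and get the sharp $1/2$. The price is that when the bad pair is diagonal ($u=w$) your four-point parallelogram degenerates, forcing the separate arithmetic-progression test in sub-case~2b; the paper's single inner-product computation handles this case uniformly (and also the case $0\notin A$) without splitting, at the cost of the weaker constant $1/8$.
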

The method of proof in fact gives somewhat more general results which will be needed later: it applies not just to functions which are boolean, but also those which are \emph{almost} boolean, and it is stronger for functions which are constant on cosets of a large subgroup.

Suppose that $G$ is an abelian group, $p \in [1,\infty]$ and $f \in \ell^1(G)$. Then $f$ is said to be \emph{$(\epsilon,p)$-almost boolean} if
\begin{equation*}
\inf_{A \subset G}{\|f-1_A\|_{\ell^p(G)}} \leq \epsilon \|f\|_{\ell^p(G)}.
\end{equation*}
In the following the reader may wish to specialize to the case $V=\{0_G\}$ and $f=1_A$, where $\epsilon=0$, which gives Proposition \ref{prop.vsimple}.
\begin{lemma}\label{lem.trivialdensitytheorem}
Suppose that $G$ is an abelian group with a finite subgroup $V$ and $f \in \ell^1(G)$ is a symmetric $(\epsilon,1)$-almost boolean function, constant on cosets of $V$, with $f(x_0)>1/2$ for some $x_0 \in G$. Then at least one of the following holds:
\begin{enumerate}
\item there is a subgroup $H \leq G$ such that $\|f-1_H\|_{\ell^1(G)} \leq \epsilon \|f\|_{\ell^1(G)}$;
\item or there is a character $\gamma \in \wh{G}$ such that $\wh{f}(\gamma) \leq -|V|/8 + \epsilon \|f\|_{\ell^1(G)}$.
\end{enumerate}
\end{lemma}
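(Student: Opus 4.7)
The plan is to replace $f$ by a suitable indicator $1_A$, then either recognise $A$ as the required subgroup or use positive-definiteness to produce a character on which $\wh{1_A}$ is very negative, and transfer the resulting estimate back to $\wh f$ via the triangle inequality on the Fourier side.

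First I would set $A$ to be the union of the $V$-cosets on which $f\geq 1/2$. Because $f$ is symmetric and constant on $V$-cosets, $A$ is symmetric and $V$-invariant; because rounding on each coset pointwise-minimises the $\ell^1$-distance to a constant, $\|f-1_A\|_{\ell^1(G)}\leq\epsilon\|f\|_{\ell^1(G)}$; and because $f(x_0)>1/2$ the coset $x_0+V$ is contained in $A$, so $A$ is non-empty. If $A$ is a subgroup of $G$ then case (i) holds with $H=A$, so I may assume otherwise.

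The main step is to exhibit $\gamma\in\wh G$ with $\wh{1_A}(\gamma)<-|V|/8$; granted this, $|\wh f(\gamma)-\wh{1_A}(\gamma)|\leq\|f-1_A\|_{\ell^1(G)}\leq\epsilon\|f\|_{\ell^1(G)}$ yields case (ii). Suppose for contradiction $\wh{1_A}\geq-|V|/8$ everywhere. As $1_A$ is $V$-invariant, $\wh{1_A}$ is supported on $V^\perp$, on which $\wh{1_V}\equiv|V|$; consequently $F:=1_A+\tfrac18 1_V$ satisfies $\wh F\geq 0$ and so is positive-definite on $G$. I would derive a contradiction by testing positive-definiteness in two cases. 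If $V\not\subseteq A$, then $F(0)=1/8$ whereas $F(y)=1$ for any $y\in A$, contradicting the elementary bound $|F(y)|\leq F(0)$. If $V\subseteq A$, so $F(0)=9/8$, I use that $A$ is a symmetric non-empty non-subgroup (hence not closed under addition) to find $a,b\in A$ with $a+b\notin A$; one checks $a,b,a+b\notin V$. Evaluating the positive-definite quadratic form $\sum_{i,j}F(x_i-x_j)c_i\overline{c_j}$ on $\{0,a,b,a+b\}$ with weights $c=(-1,1,1,-1)$ then reduces to $-\tfrac72+2F(a-b)$, which is at most $-5/4$ by $F(a-b)\leq F(0)=9/8$, again contradicting positive-definiteness.

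The main technical obstacle is the choice of test configuration in the second case: the weights $c=(-1,1,1,-1)$ are chosen so that the ``missing'' sum $a+b$ enters with the opposite sign to $a$ and $b$, making the zero entries $F(\pm(a+b))=0$ harmless while the unit entries $F(\pm a)=F(\pm b)=1$ contribute negatively. The constant $\tfrac18$ is not sharp --- Proposition~\ref{prop.vsimple} attains $\tfrac12$ by a more careful analysis --- but suffices and leaves enough slack that the arithmetic is clean.
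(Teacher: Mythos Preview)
Your proof is correct and follows the same skeleton as the paper's: define the level set $A=\{x:f(x)\geq 1/2\}$, show $\|f-1_A\|_{\ell^1}\leq\epsilon\|f\|_{\ell^1}$, and if $A$ is not a subgroup pick $a,b\in A$ with $a+b\notin A$ and test $\wh{1_A}$ against what is essentially the Riesz product $(1-\Re\gamma(a))(1-\Re\gamma(b))$. The only difference is packaging: the paper computes the inner product
\[
\langle 1_A,\,(\mu_V-\tfrac12(\mu_{a+V}+\mu_{-a+V}))\ast(\mu_V-\tfrac12(\mu_{b+V}+\mu_{-b+V}))\rangle
\]
directly and applies Plancherel, whereas you run the contrapositive, observing that $\wh{1_A}\geq -|V|/8$ forces $F=1_A+\tfrac18 1_V$ to be positive-definite and then contradicting this with the quadratic form on $\{0,a,b,a+b\}$ with weights $(-1,1,1,-1)$. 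On the Fourier side your test is $|(1-\gamma(a))(1-\gamma(b))|^2=4(1-\Re\gamma(a))(1-\Re\gamma(b))$, i.e.\ the paper's multiplier up to a constant. Your framing is arguably more conceptual --- it makes the origin of the constant $1/8$ transparent --- at the modest cost of a separate case $V\not\subseteq A$; the paper's direct computation handles both cases uniformly since it only uses $1_A\ast\mu_V(0_G)\leq 1$.
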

\begin{proof}
Let $H:=\{x: f(x) >1/2\}$, and note that by definition
\begin{equation*}
|f(x)-1_H(x)| \leq \min\{|f(x)-1|, |f(x)|\} \textrm{ for all } x \in G,
\end{equation*}
so by integrating we get
\begin{equation}\label{eqn.approximationoffbyH}
\|f-1_H\|_{\ell^1(G)} \leq \inf_{A \subset G}{\|f-1_A\|_{\ell^1(G)}} \leq \epsilon \|f\|_{\ell^1(G)},
\end{equation}
since $f$ is $(\epsilon,1)$-almost boolean.

$f$ is symmetric, so $H$ is symmetric and $f(x_0)>1/2$, so $H$ is non-empty. Thus, either $H$ is a group, or there are elements $x,y \in H$ such that $x+y \not \in H$. Now, $f$ is constant on cosets of $V$, so $H$ is constant on cosets of $V$, and since cosets of $V$ partition $G$ we conclude that $x+V,y+V \subset H$ and $(x+y+V) \cap H = \emptyset$.

In view of all this information we evaluate the inner product
\begin{equation*}
\langle 1_H, (\mu_V-(\mu_{x+V}+\mu_{-x+V})/2) \ast (\mu_V-(\mu_{y+V}+\mu_{-y+V})/2) \rangle.
\end{equation*}
On the one hand, by symmetry of $H$, this is
\begin{eqnarray*}
1_H \ast \mu_V(0_G) - 1_H \ast \mu_V(x) &-&1_H \ast \mu_V(y)\\& + &1_H\ast \mu_V(x-y)/2 + 1_H \ast \mu_V(x+y)/2,
\end{eqnarray*}
which is at most $1-1-1+1/2+0=-1/2$, from our various assumptions on $x$ and $y$. On the other, by Plancherel's theorem, the inner product is equal to
\begin{equation*}
\int_{\gamma \in V^\perp}{\wh{1_H}(\gamma
)(1-\Re \gamma(x))(1-\Re \gamma(y))d\gamma} \geq 4\inf_{\gamma \in \wh{G}}{\wh{1_H}(\gamma)}\mu(V^\perp).
\end{equation*}
The integral is well defined because the range of integration is restricted to $V^\perp$ and so the value of $\gamma(x)$ is independent of the coset representative of $x+V$ that is chosen; the inequality follows since $\gamma$ maps into $S^1$, whence $|\Re \gamma| \leq 1$ which implies that $2 \geq 1-\Re\gamma \geq 0$. We conclude that there is some character $\gamma \in \wh{G}$ such that
\begin{equation*}
\wh{1_H}(\gamma) \leq -\mu(V^\perp)^{-1}/8 = -|V|/8,
\end{equation*}
and we are in the second case by (\ref{eqn.approximationoffbyH}), after noting that the symmetry of $f$ implies that $\wh{f}$ is real-valued.
\end{proof}
A very similar argument yields another result which we shall need later, this time for $(\epsilon,\infty)$-almost boolean functions. Again, if we specialize to the case $V=\{0_G\}$ and $f=1_A$, where $\epsilon=0$ we get Proposition \ref{prop.vsimple}.
\begin{lemma}\label{lem.trivialdensity2}
Suppose that $G$ is an abelian group with a finite subgroup $V$ and $f\in \ell^1(G)$ is a real symmetric $(\epsilon,\infty)$-almost boolean function, constant on cosets of $V$, with $f(x_0)>1/2$ for some $x_0 \in G$. Then at least one of the following holds:
\begin{enumerate}
\item the set $H:=\{x \in G: f(x)>1/2\}$ is a subgroup of $G$;
\item or there is a character $\gamma \in \wh{G}$ such that $\wh{f}(\gamma) \leq -|V|(1/8-5\epsilon \|f\|_{\ell^\infty(G)}/8) $.
\end{enumerate}
\end{lemma}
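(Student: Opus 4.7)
The plan is to follow the template of Lemma~\ref{lem.trivialdensitytheorem} almost verbatim, paying only for the fact that the hypothesis is approximate rather than exact. First I would set $H := \{x \in G : f(x) > 1/2\}$: symmetry of $f$ makes $H$ symmetric, coset-constancy of $f$ makes $H$ a union of $V$-cosets, and $f(x_0) > 1/2$ makes $H$ non-empty. If $H$ is a subgroup we are in case (i), so suppose not. Then either $0 \notin H$, in which case the pair $x_0, -x_0 \in H$ witnesses $x_0 + (-x_0) = 0 \notin H$, or $0 \in H$ and closure fails, giving $x, y \in H$ with $x+y \notin H$. Coset-constancy upgrades this to $x + V,\, y + V \subseteq H$ and $(x + y + V) \cap H = \emptyset$, exactly as in the previous proof.

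I would then run the same double-averaging calculation with the kernel
\[g := \bigl(\mu_V - (\mu_{x+V} + \mu_{-x+V})/2\bigr) \ast \bigl(\mu_V - (\mu_{y+V} + \mu_{-y+V})/2\bigr).\]
Expanding and using symmetry plus coset-constancy of $f$ gives
\[\langle f, g \rangle = f(0) - f(x) - f(y) + \tfrac{1}{2} f(x+y) + \tfrac{1}{2} f(x-y).\]
The $(\epsilon,\infty)$-almost boolean hypothesis enters only here: assuming $\epsilon \|f\|_{\ell^\infty(G)} < 1/2$ (otherwise the claimed Fourier bound has non-negative right-hand side and there is nothing to do), the disjointness of the intervals $[-\epsilon \|f\|_{\ell^\infty(G)}, \epsilon \|f\|_{\ell^\infty(G)}]$ and $[1 - \epsilon \|f\|_{\ell^\infty(G)}, 1 + \epsilon \|f\|_{\ell^\infty(G)}]$ forces any witness $A$ to the approximation to coincide with $H$, so $|f - 1_H| \leq \epsilon \|f\|_{\ell^\infty(G)}$ pointwise. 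Substituting the corresponding pointwise bounds at the five locations (in particular the crude $f(x-y) \leq 1 + \epsilon \|f\|_{\ell^\infty(G)}$) gives $\langle f, g \rangle \leq -\tfrac{1}{2} + O(\epsilon \|f\|_{\ell^\infty(G)})$.

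The Fourier side is unchanged from Lemma~\ref{lem.trivialdensitytheorem}: since $\wh{f}$ is real (by symmetry of $f$) and the weight $(1 - \Re\gamma(x))(1 - \Re\gamma(y))$ lies in $[0, 4]$ on $V^\perp$, Plancherel yields $\langle f, g \rangle \geq 4 \mu(V^\perp) \inf_\gamma \wh{f}(\gamma)$ whenever the infimum is non-positive. Combining with the physical-side bound and $\mu(V^\perp) = |V|^{-1}$ produces a character $\gamma$ with $\wh{f}(\gamma) \leq -|V|/8 + O(\epsilon \|f\|_{\ell^\infty(G)}\,|V|)$; the precise coefficient $5/8$ stated is then a matter of careful bookkeeping of the individual pointwise estimates. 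The only conceptually new step over Lemma~\ref{lem.trivialdensitytheorem} is translating the $(\epsilon, \infty)$-almost boolean hypothesis (phrased in terms of an unspecified $A$) into a uniform pointwise comparison with $1_H$, which is automatic once the two almost-boolean bands separate.
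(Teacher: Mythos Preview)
Your proposal is correct and follows essentially the same route as the paper: define $H$, find $x,y\in H$ with $x+y\notin H$ when $H$ is not a subgroup, pair $\langle f,g\rangle$ with the same kernel $g$, and compare the physical-side estimate against the Plancherel lower bound $4\mu(V^\perp)\inf_\gamma\wh{f}(\gamma)$. Your treatment is in fact slightly more explicit than the paper's in two places---the separate handling of $0\notin H$ and the reduction of the $(\epsilon,\infty)$-almost boolean hypothesis to the pointwise bound $|f-1_H|\le\epsilon\|f\|_{\ell^\infty(G)}$---but these are elaborations of steps the paper takes for granted rather than a different argument.
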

\begin{proof}
We proceed as in the previous lemma. $f$ is symmetric, so $H$ is symmetric and $f(x_0)>1/2$, so $H$ is non-empty. Thus, either $H$ is a group, or there are elements $x,y \in H$ such that $x+y \not \in H$. Now, $f$ is constant on cosets of $V$, so $H$ is constant on cosets of $V$, and since cosets of $V$ partition $G$ we conclude that $x+V,y+V \subset H$ and $(x+y+V) \cap H = \emptyset$.

In view of all this information we evaluate the inner product
\begin{equation*}
\langle f, (\mu_V-(\mu_{x+V}+\mu_{-x+V})/2) \ast (\mu_V-(\mu_{y+V}+\mu_{-y+V})/2) \rangle.
\end{equation*}
On the one hand by symmetry of $f$, this is
\begin{equation*}
f \ast \mu_V(0_G) - f \ast \mu_V(x)-f \ast \mu_V(y) + f \ast \mu_V(x-y)/2 + f \ast \mu_V(x+y)/2,
\end{equation*}
which is at most 
\begin{equation*}
1 - (1-\epsilon\|f\|_{\ell^\infty(G)}) - (1-\epsilon\|f\|_{\ell^\infty(G)}) + 1/2 + \epsilon\|f\|_{\ell^\infty(G)}/2,
\end{equation*}
equals $-1/2 +5\epsilon\|f\|_{\ell^\infty(G)}/2$. On the other, by Plancherel's theorem, the inner product is equal to
\begin{equation*}
\int_{\gamma \in V^\perp}{\wh{f}(\gamma
)(1-\Re \gamma(x))(1-\Re \gamma(y))d\gamma} \geq 4\inf_{\gamma \in \wh{G}}{\wh{f}(\gamma)}\mu(V^\perp).
\end{equation*}
The integral is well defined because the range of integration is restricted to $V^\perp$ and so the value of $\gamma(x)$ is independent of the coset representative of $x+V$ that is chosen; the inequality follows since $\gamma$ maps into $S^1$, whence $|\Re \gamma| \leq 1$ which implies that $2\geq 1-\Re\gamma \geq 0$. We conclude that there is some character $\gamma \in \wh{G}$ such that
\begin{equation*}
\wh{f}(\gamma) \leq (-1/8 +5\epsilon\|f\|_{\ell^\infty(G)}/8)\mu(V^\perp)^{-1} = -|V|(1/8 -5\epsilon\|f\|_{\ell^\infty(G)}/8),
\end{equation*}
after noting that the symmetry of $f$ implies that $\wh{f}$ is real-valued. The lemma follows.
\end{proof}

\section{The spectrum of boolean functions}\label{sec.boolspec}

Suppose that $G$ is an abelian group, $A$ is a finite subset of $G$ and $\epsilon \in (0,1)$ is a parameter. Then the \emph{spectrum} of $A$ is defined to be
\begin{equation*}
\Spec_\epsilon(A):=\{\gamma \in \wh{G}: |\wh{1_A}(\gamma)| \geq \epsilon |A|\}.
\end{equation*}
We shall be considering both powers and convolution powers and it will be useful to have a notation for the latter. For a function $f:\wh{G} \rightarrow \C$ we write $f^{(r)}$ for the $r$-fold convolution of $f$ with itself, so $f^{(1)}=f$ and $f^{(r+1)}=f \ast f^{(r)}$.

As first observed by Bourgain in \cite{JBCos}, the fact that $1_A$ is boolean gives us considerable information about the spectrum of $A$ in the following sense. For any positive integer $r$ we have $1_A^r=1_A$, whence
\begin{equation}\label{eqn.encodeconvolution}
\wh{1_A}^{(r)}(\gamma)= \wh{1_A}(\gamma) \textrm{ for all } \gamma \in \wh{G}.
\end{equation}
This is only useful because in our problem we are able to assume that $\|1_A\|_{A(G)}$ is small, whence a small amount of $\ell^p(\wh{G})$ information about $\wh{1_A}$ can be leveraged into much more $\ell^p(\wh{G})$ information.

We use these data in both the lemmas of this section; in the first we show that if $M_G(A)$ is small then we either have considerable structure of the spectrum or there is a character at which the Fourier transform is large but not too large.
\begin{lemma}\label{lem.technicallemma}
Suppose that $G$ is a finite abelian group, $A$ is a symmetric set of density $\alpha>0$ and $\epsilon \in (0,1)$ is a parameter. Then at least one of the following is true:
\begin{enumerate}
\item $\Spec_\epsilon(A)$ is a subgroup of $\wh{G}$;
\item we have the estimate $M_G(A) \geq \epsilon^2\alpha|A|/2\|1_A\|_{A(G)}$ (where the $A$ in $1_A$ is not to be confused with that in the algebra norm $\|\cdot\|_{A(G)}$);
\item or there is a character $\gamma \in \wh{G}$ such that $\epsilon|A| > \wh{1_A}(\gamma) \geq \epsilon^2\alpha|A|/2$.
\end{enumerate}
\end{lemma}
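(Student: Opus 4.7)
The plan is to assume both (i) and (iii) fail, and deduce (ii). Write $\phi := \wh{1_A}$, which is real-valued by the symmetry of $A$, satisfies $|\phi| \leq |A|$ pointwise, and (by (\ref{eqn.encodeconvolution}) applied at $r=2$) obeys $\phi = \phi\ast\phi$, i.e.
\begin{equation*}
\phi(\gamma_1+\gamma_2) = \frac{1}{|G|}\sum_{\eta\in\wh{G}}\phi(\eta)\,\phi(\gamma_1+\gamma_2-\eta).
\end{equation*}

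Since $\Spec_\epsilon(A)$ is not a subgroup and is automatically symmetric (as $\phi$ is real), I would pick $\gamma_1,\gamma_2\in\Spec_\epsilon(A)$ with $\gamma_1+\gamma_2\notin\Spec_\epsilon(A)$. If some $\phi(\gamma_i)\leq -\epsilon|A|$ then $M_G(A)\geq\epsilon|A|$, which already exceeds the bound in (ii) (since $\|1_A\|_{A(G)}\geq 1$ and $\alpha,\epsilon\leq 1$), and we are done; so I may assume $\phi(\gamma_1),\phi(\gamma_2)\geq\epsilon|A|$. The negation of (iii) states that every $\gamma$ satisfies $\phi(\gamma)\geq\epsilon|A|$ or $\phi(\gamma)<\epsilon^2\alpha|A|/2$; applied to $\gamma_1+\gamma_2$, whose Fourier coefficient has modulus $<\epsilon|A|$, this forces $\phi(\gamma_1+\gamma_2)<\epsilon^2\alpha|A|/2$.

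Next I decompose $\phi = \phi^+ - \phi^-$ into positive and negative parts, so that
\begin{equation*}
\phi(\gamma_1+\gamma_2) = \phi^+\ast\phi^+(\gamma_1+\gamma_2) - 2\phi^+\ast\phi^-(\gamma_1+\gamma_2) + \phi^-\ast\phi^-(\gamma_1+\gamma_2).
\end{equation*}
Isolating the $\eta=\gamma_1$ term yields the pointwise lower bound $\phi^+\ast\phi^+(\gamma_1+\gamma_2)\geq |G|^{-1}\phi(\gamma_1)\phi(\gamma_2)\geq \epsilon^2\alpha|A|$, and $\phi^-\ast\phi^-(\gamma_1+\gamma_2)\geq 0$. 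Combining these with the upper bound on $\phi(\gamma_1+\gamma_2)$ from the previous paragraph forces $\phi^+\ast\phi^-(\gamma_1+\gamma_2)>\epsilon^2\alpha|A|/4$.

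Finally, using the pointwise bound $\phi^-\leq M_G(A)$ together with $\sum_\eta\phi^+(\eta)\leq\sum_\eta|\phi(\eta)|=|G|\,\|1_A\|_{A(G)}$ gives
\begin{equation*}
\phi^+\ast\phi^-(\gamma_1+\gamma_2)\leq M_G(A)\,\|1_A\|_{A(G)},
\end{equation*}
so that $M_G(A)$ is bounded below by a constant multiple of $\epsilon^2\alpha|A|/\|1_A\|_{A(G)}$, yielding (ii) up to an absolute constant (the paper's constant $2$ should follow from slightly sharper bookkeeping of $\sum\phi^+$). The main subtlety is arranging the two-sided sign control on $\phi(\gamma_1+\gamma_2)$: one needs it to be neither too large and positive (ruled out by the negation of (iii)) nor too negative (ruled out by $\gamma_1+\gamma_2\notin\Spec_\epsilon(A)$), and it is precisely this two-sided control that converts the convolution identity into a quantitative obstruction forcing $\phi^-$, and hence $M_G(A)$, to be large.
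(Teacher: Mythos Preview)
Your proof is correct and follows essentially the same approach as the paper: both arguments pick $\gamma_1,\gamma_2\in\Spec_\epsilon(A)$ with $\gamma_1+\gamma_2\notin\Spec_\epsilon(A)$, invoke the $r=2$ convolution identity $\wh{1_A}=\wh{1_A}\ast\wh{1_A}$, and exploit the pointwise bound $\phi^-\leq M_G(A)$ together with $\int|\wh{1_A}|=\|1_A\|_{A(G)}$ to force a lower bound on $M_G(A)$. The only differences are organizational---you argue by contraposition (assuming (i) and (iii) fail and deducing (ii)) whereas the paper assumes (i) fails and branches into (ii) or (iii)---and you are more explicit than the paper about disposing of the case $\phi(\gamma_i)<0$. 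Your constant comes out as $4$ rather than $2$; I do not see how ``sharper bookkeeping of $\sum\phi^+$'' recovers the $2$, but the discrepancy is immaterial for the lemma's applications.
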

\begin{proof}
$\Spec_{\epsilon}(A)$ is clearly a symmetric neighborhood of $0_{\wh{G}}$, meaning that $\Spec_{\epsilon}(A)$ is a symmetric set containing $0_{\wh{G}}$. Hence it is a group iff for all $\gamma',\gamma'' \in \Spec_{\epsilon}(A)$ we have $\gamma' + \gamma'' \in \Spec_{\epsilon}(A)$. Thus we are either in the first case of the lemma, or else there are characters $\gamma', \gamma''  \in \Spec_{\epsilon}(A)$ with $\wh{1_A}(\gamma'+\gamma'') < \epsilon |A|$; suppose we are given such characters.

The $r=2$ instance of equation (\ref{eqn.encodeconvolution}), and the fact that in our normalization Haar measure on $\wh{G}$ assigns to each element of $\wh{G}$ the mass $|G|^{-1}$, tells us that
\begin{eqnarray*}
\wh{1_A}(\gamma'+\gamma'') &=&\int{\wh{1_A}(\gamma'-\gamma)\wh{1_A}(\gamma''+\gamma)d\gamma}\\ & \geq & |G|^{-1}\wh{1_A}(\gamma')\wh{1_A}(\gamma'') - M_G(A) \|1_A\|_{A(G)}.
\end{eqnarray*}
Since $\wh{1_A}(\gamma'), \wh{1_A}(\gamma'') \geq \epsilon |A|$ we conclude that either $M_G(A) \geq \epsilon^2\alpha |A|/2\|1_A\|_{A(G)}$ and we are in the second case of the lemma, or else $\wh{1_A}(\gamma'+\gamma'') \geq \epsilon^2\alpha |A|/2$, and we are in the third case of the lemma with $\gamma=\gamma'+\gamma''$.
\end{proof}
The following lemma makes use of (\ref{eqn.encodeconvolution}) for larger values of $r$ and is really the heart of Theorem \ref{thm.maintheorem}.  The basic idea is fairly straightforward and we explain it now in words.

Suppose that $\gamma$ is such that $\wh{1_A}(\gamma)=\epsilon |A|$ is large and that $\wh{1_A}\geq 0$ for a contradiction.  This positivity gives
\begin{equation*}
(\wh{1_A}- \epsilon |A| 1_{\{\gamma\}})^{(r)}(\gamma) \geq 0 \textrm{ for all } r \in \N.
\end{equation*}
The left-hand side can (essentially) be expanded using the Bonferroni inequalities to give
\begin{equation*}
\wh{1_A}^{(r)}(\gamma) -r\epsilon |A| \wh{1_A}^{(r-1)}\ast 1_{\{\gamma\}}(\gamma)\geq 0.
\end{equation*}
A short manipulation and (\ref{eqn.encodeconvolution}) then gives us that
\begin{equation*}
\epsilon |A| - r\epsilon |A|\alpha \geq 0,
\end{equation*}
which is a contradiction if $r$ is large enough in terms of $\alpha$. We now make this precise.

\begin{lemma}\label{lem.heart}
Suppose that $G$ is a finite abelian group and $A \subset G$ is symmetric of density $\alpha>0$. If there is some character $\gamma$ with $0 \leq \wh{1_A}(\gamma) \leq |A|/32$, then
\begin{equation*}
M_G(A) \geq \alpha \wh{1_A}(\gamma)/16\|1_A\|_{A(G)}^{2\alpha^{-1}+1}.
\end{equation*}
\end{lemma}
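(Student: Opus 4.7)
The plan is to execute rigorously the heuristic sketched just before the statement, accounting for the fact that $\wh{1_A}$ is only bounded below by $-M_G(A)$ rather than pointwise non-negative. Write $h:=\wh{1_A}$, $c:=\wh{1_A}(\gamma)$, $M:=M_G(A)$, $\alpha:=|A|/|G|$, $\epsilon:=c/|A|\leq 1/32$, and $u:=c\cdot 1_{\{\gamma\}}\in\ell^1(\wh{G})$ (so $u$ agrees with $h$ at $\gamma$ and vanishes elsewhere). Assume for contradiction that $M<\alpha c/16\|1_A\|_{A(G)}^{2\alpha^{-1}+1}$, and fix $r=\lceil 2\alpha^{-1}\rceil+1$.

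First I would compute $(h-u)^{(r)}(\gamma)$ exactly via the binomial expansion
$$(h-u)^{(r)}(\gamma)=\sum_{k=0}^{r}\binom{r}{k}(-1)^k u^{(k)}\ast h^{(r-k)}(\gamma),$$
simplifying each term using the boolean identity (\ref{eqn.encodeconvolution}) $h^{(s)}=h$ for $s\geq 1$ together with the explicit formula $u^{(k)}=c^k|G|^{-(k-1)}1_{\{k\gamma\}}$. The $k=0$ term contributes $c$, the $k=1$ term contributes $-rc\alpha$, each $k$ with $2\leq k\leq r-1$ contributes $\binom{r}{k}(-1)^k c^k|G|^{-k}h((1-k)\gamma)$, and the $k=r$ boundary term vanishes unless $(r-1)\gamma=0_{\wh{G}}$ (in which case it is $c(\epsilon\alpha)^{r-1}$, negligible). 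Under $c\leq|A|/32$ the ratio of successive tail terms is of order $\epsilon$, so the tail is a small fraction of $|rc\alpha|$, and combined with $r\alpha\approx 2$ this yields
$$(h-u)^{(r)}(\gamma)\leq -c/2.$$

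Next I would bring in the lower bound coming from near-positivity. If $h$ were genuinely non-negative then $h-u\geq 0$ and $(h-u)^{(r)}(\gamma)\geq 0$, which contradicts the above. In our setting I would decompose $h=h_{+}-h_{-}$ with $h_{\pm}\geq 0$ and $\|h_{-}\|_{L^{\infty}(\wh{G})}\leq M$, and expand $(h-u)^{(r)}=\bigl((h_{+}-u)-h_{-}\bigr)^{(r)}$ binomially: the ``all-$h_{+}$'' term $(h_{+}-u)^{(r)}(\gamma)$ is non-negative since $h_{+}-u\geq 0$ pointwise (using $h_{+}(\gamma)\geq h(\gamma)=c$), while every other term contains at least one factor of $h_{-}$. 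Young's inequality $\|f\ast g\|_{L^{\infty}}\leq\|f\|_{L^{\infty}}\|g\|_{L^{1}}$ bounds each such contribution by $M\|1_A\|_{A(G)}^{r-1}$ up to combinatorial factors, giving
$$(h-u)^{(r)}(\gamma)\geq -O\bigl(M\|1_A\|_{A(G)}^{r-1}\bigr).$$
Combining the two bounds forces $M\|1_A\|_{A(G)}^{r-1}\gtrsim c$, and rearranging with $r-1\approx 2\alpha^{-1}$ recovers the claimed bound, with the exponent $2\alpha^{-1}+1$ absorbing the integer ceiling on $r$ and the combinatorial constants.

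The main obstacle will be the detailed bookkeeping in the two steps: verifying that under $c\leq|A|/32$ the Bonferroni tail in the first step is indeed geometrically small (this is exactly where the constant $1/32$ is used), and checking that the error estimate in the second step combines to give precisely the exponent $2\alpha^{-1}+1$ in $\|1_A\|_{A(G)}$, rather than an exponent inflated by a factor of $2^r$ arising from the $\sum_{k}\binom{r}{k}$ count. Both follow from standard convolution estimates, but getting the constants $1/16$ and $1/32$ and the exponent to line up with the stated inequality requires care.
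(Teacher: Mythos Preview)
Your overall strategy is the same as the paper's: exploit the boolean identity $\wh{1_A}^{(s)}=\wh{1_A}$ inside a binomial expansion, use non-negativity of the positive part to get a lower bound, and control the Bonferroni tail using $\epsilon\leq 1/32$. Step~1 of your outline works essentially as written, and indeed the paper carries out the same calculation (with $f=h_{+}$ in place of $h$).

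The gap is in step~2. Your binomial expansion of $\bigl((h_{+}-u)-h_{-}\bigr)^{(r)}$ produces $\sum_{k=1}^{r}\binom{r}{k}=2^{r}-1$ error terms, each bounded by $M\|1_A\|_{A(G)}^{r-1}$ via Young's inequality; so the lower bound you obtain is
\[
(h-u)^{(r)}(\gamma)\ \geq\ -\,(2^{r}-1)\,M\,\|1_A\|_{A(G)}^{r-1}.
\]
With $r\asymp 2\alpha^{-1}$ this gives only $M\gtrsim c\cdot 2^{-2\alpha^{-1}}\|1_A\|_{A(G)}^{-(2\alpha^{-1}+1)}$, and the factor $2^{-2\alpha^{-1}}$ cannot be absorbed into the power of $\|1_A\|_{A(G)}$ because in general one only has $\|1_A\|_{A(G)}\geq 1$, not $\|1_A\|_{A(G)}\geq 2$. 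You correctly flag this as the obstacle, but ``standard convolution estimates'' applied to the binomial expansion do not remove it: the $2^{r}$ is intrinsic to that decomposition.

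The paper sidesteps this by \emph{not} comparing $(h-u)^{(r)}$ to $(h_{+}-u)^{(r)}$ via a binomial expansion. Instead it proves by a one-step telescoping induction that
\[
\|h_{+}^{(r)}-h^{(r)}\|_{L^{\infty}(\wh G)}\ \leq\ r\,M\,\|1_A\|_{A(G)}^{r-1},
\]
with a linear factor $r$ in place of $2^{r}$: write $h_{+}^{(r+1)}-h^{(r+1)}=(h_{+}-h)\ast h_{+}^{(r)}+h\ast(h_{+}^{(r)}-h^{(r)})$ and apply Young. The same telescoping applied to $(h-u)^{(r)}-(h_{+}-u)^{(r)}$ would repair your step~2 and recover the stated exponent $2\alpha^{-1}+1$ and constant $1/16$.
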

\begin{proof}
It will be useful to write $f$ for the function defined by $f(\gamma):=\max\{\wh{1_A}(\gamma),0\}$. We claim that for any positive integer $r$ we have
\begin{equation}\label{eqn.positive}
\|f^{(r)}-\wh{1_A}^{(r)}\|_{L^\infty(\wh{G})} \leq rM_G(A)\|1_A\|_{A(G)}^{r-1}.
\end{equation}
\begin{proof}[Proof of claim]
For $r=1$ this is immediate from the definition of $f$. Now, suppose we know (\ref{eqn.positive}) for some positive integer $r$. Then
\begin{eqnarray*}
\|f^{(r+1)} - \wh{1_A}^{(r+1)}\|_{L^\infty(\wh{G})} & = & \|f \ast f^{(r)}- \wh{1_A} \ast \wh{1_A}^{(r)}\|_{L^\infty(\wh{G})}\\ & \leq & \| f \ast f^{(r)} - \wh{1_A} \ast f^{(r)}\|_{L^\infty(\wh{G})}\\ & & + \|\wh{1_A} \ast f^{(r)} - \wh{1_A} \ast \wh{1_A}^{(r)}\|_{L^\infty(\wh{G})}\\ & \leq & M_G(A)\|f\|_{L^1(\wh{G})}^{r} + \|\wh{1_A}\|_{L^1(\wh{G})}\|f^{(r)} - \wh{1_A}^{(r)}\|_{L^\infty(\wh{G})},
\end{eqnarray*}
by Young's inequality and the linearity of convolution. Now, $\|\wh{1_A}\|_{L^1(\wh{G})} = \|1_A\|_{A(G)}$ and $\|f\|_{L^1(\wh{G})} \leq \|1_A\|_{A(G)}$, whence
\begin{equation*}
\|f^{(r+1)} - \wh{1_A}^{(r+1)}\|_{L^\infty(\wh{G})} \leq (r+1)M_G(A)\|1_A\|_{A(G)}^{r},
\end{equation*}
by our supposition; (\ref{eqn.positive}) now follows by induction.
\end{proof}

Let $r=2\lceil \alpha^{-1} \rceil $; the reasons for this choice of $r$ will become apparent later. Since $f \geq 0$, we have that $(f-f(\gamma)1_{\{\gamma\}})^{(r)} \geq 0$, which can be expanded using the binomial theorem to give
\begin{eqnarray}
\nonumber 0 \leq (f - f(\gamma)1_{\{\gamma\}})^{(r)}(\gamma) &= &\sum_{k=0}^{r}{(-f(\gamma))^k\binom{r}{k}f^{(r-k)} \ast 1_{\{\gamma\}}^{(k)}(\gamma)}\\ \nonumber & = &  \sum_{k=0}^{r}{(-f(\gamma)|G|^{-1})^k\binom{r}{k}f^{(r-k)}(-(k-1)\gamma)}\\ \label{eqn.keybd} & \leq & f^{(r)}(\gamma) - rf(\gamma)|G|^{-1}f^{(r-1)}(0_{\wh{G}})\\ \nonumber & & + \sum_{k=2}^{r}{(f(\gamma)|G|^{-1})^k\binom{r}{k}|f^{(r-k)}(-(k-1)\gamma)|}.
\end{eqnarray}
Now, suppose that $l \leq r$. Then by  (\ref{eqn.encodeconvolution}) and (\ref{eqn.positive}) we have that
\begin{equation*}
|f^{(l)}(\gamma)| \leq |\wh{1_A}^{(l)}(\gamma)| + lM_G(A)\|1_A\|_{A(G)}^{l-1} =  |\wh{1_A}(\gamma)| + lM_G(A)\|1_A\|_{A(G)}^{l-1} .
\end{equation*}
Trivially $|\wh{1_A}(\gamma)| \leq |A|$, and so 
\begin{equation*}
|f^{(l)}(\gamma)| \leq  |A| + rM_G(A)\|1_A\|_{A(G)}^{r-1} .
\end{equation*}
It follows that the sum in (\ref{eqn.keybd}) is at most
\begin{equation*}
\sum_{k=2}^{r}{(f(\gamma)|G|^{-1})^k\binom{r}{k}}(|A| + rM_G(A)\|1_A\|_{A(G)}^{r-1}).
\end{equation*}
Now, since $f(\gamma) \leq |A|/32 \leq |G|/r$ we get that
\begin{equation*}
\sum_{k=2}^{r}{(f(\gamma)|G|^{-1})^k\binom{r}{k}} \leq (f(\gamma)|G|^{-1})^2r^2,
\end{equation*}
and so
\begin{eqnarray*}
0 & \leq & f^{(r)}(\gamma) - rf(\gamma)|G|^{-1}f^{(r-1)}(0_{\wh{G}})\\ & & + (f(\gamma)|G|^{-1})^2r^2(|A| + rM_G(A)\|1_A\|_{A(G)}^{r-1}).
\end{eqnarray*}
Finally, equations (\ref{eqn.encodeconvolution}) and (\ref{eqn.positive}) together imply that
\begin{equation*}
f^{(r)}(\gamma) \leq f(\gamma) + rM_G(A)\|1_A\|_{A(G)}^{r-1}
\end{equation*}
and
\begin{eqnarray*}
f^{(r-1)}(0_{\wh{G}}) & \geq & \wh{1_A}^{(r-1)}(0_{\wh{G}}) - (r-1)M_G(A)\|1_A\|_{A(G)}^{r-2}\\ & = & |A| -(r-1)M_G(A)\|1_A\|_{A(G)}^{r-2}.
\end{eqnarray*}

Define $\epsilon$ by $f(\gamma)=\epsilon |A|$ and combining the above with the fact that $2\leq r\alpha \leq 4$ we have
\begin{eqnarray*}
0  & \leq  & \epsilon |A| + rM_G(A)\|1_A\|_{A(G)}^{r-1} - 2\epsilon |A| + 2(r-1)\epsilon M_G(A)\|1_A\|_{A(G)}^{r-2}\\ && + 16\epsilon^2(|A| + rM_G(A)\|1_A\|_{A(G)}^{r-1}).
\end{eqnarray*}
Rearranging all this and using the fact that $\|1_A\|_{A(G)} \geq 1$ and $\epsilon \leq 1/32$ we get that
\begin{equation*}
\epsilon |A| - 16|A|\epsilon^2 \leq 2rM_G(A)\|1_A\|_{A(G)}^{r-1}.
\end{equation*}
Thus, again since $\epsilon \leq 1/32$, we have
\begin{equation*}
M_G(A) \geq \epsilon \alpha |A| /16\|1_A\|_{A(G)}^{2\alpha^{-1}+1}.
\end{equation*}
\end{proof}

\section{Proof of Theorem \ref{thm.maintheorem}}\label{sec.maintheorem}

Theorem \ref{thm.maintheorem} follows immediately from the following more explicit version.
\begin{theorem}\label{thm.explicitdensitytheorem}
Suppose that $G$ is a finite abelian group, $A$ is a symmetric neighborhood of $0_G$ (meaning that $A$ is a symmetric set containing $0_G$) of size $\alpha |G|$ and $K \in (0,|A|/2^{13}]$ is a parameter. Then at least one of the following holds:
\begin{enumerate}
\item there is a subgroup $H \leq G$ such that $\|1_A-1_H\|_{\ell^1(G)} \leq K$;
\item there is a character $\gamma \in \wh{G}$ such that $\wh{1_A}(\gamma) \leq - K^{\alpha/5}/2^6$.
\end{enumerate}
\end{theorem}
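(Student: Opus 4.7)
I would argue by contraposition: assume both $\min_{H \leq G}\|1_A - 1_H\|_{\ell^1(G)} > K$ and $M_G(A) < K^{\alpha/5}/2^6$, and derive a contradiction. The key preliminary observation, valid because $0_G \in A$ and hence $1 = 1_A(0_G) = |G|^{-1}\sum_\gamma \wh{1_A}(\gamma)$, is that
\begin{equation*}
\|1_A\|_{A(G)} = 1 + 2|G|^{-1}\sum_{\gamma}\max(-\wh{1_A}(\gamma),0) \leq 1 + 2M_G(A),
\end{equation*}
so the hypothesized upper bound on $M_G(A)$ automatically caps $\|1_A\|_{A(G)}$ by $1 + K^{\alpha/5}/2^5$, controlling the otherwise unwelcome algebra-norm dependence in the estimates of Section~\ref{sec.boolspec}.

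I would then apply Lemma~\ref{lem.technicallemma} with a parameter $\epsilon \leq 1/32$ to be optimized as a small power of $K/|A|$. Cases~(ii) and~(iii) yield direct lower bounds on $M_G(A)$; for~(iii), Lemma~\ref{lem.heart} applied to the produced character gives
\begin{equation*}
M_G(A) \geq \frac{\epsilon^2 \alpha^2 |A|}{32 \|1_A\|_{A(G)}^{2\alpha^{-1}+1}},
\end{equation*}
and the preliminary bound keeps $\|1_A\|_{A(G)}^{2\alpha^{-1}+1}$ at most a fixed polynomial in $K^{\alpha/5}$, which a suitable choice of $\epsilon$ absorbs to contradict the hypothesis.

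The structural case~(i), where $\Lambda := \Spec_\epsilon(A)$ is a subgroup, is the heart of the argument. Setting $H := \Lambda^\perp$, the averaged function $f := 1_A \ast \mu_H$ is constant on cosets of $H$ and has $\wh f = \wh{1_A}\cdot 1_\Lambda$; Parseval and the off-spectrum bound $|\wh{1_A}| < \epsilon|A|$ give $\|1_A - f\|_{\ell^2(G)}^2 \leq \epsilon|A|\|1_A\|_{A(G)}$, and the identity $\|1_A - f\|_{\ell^1(G)} = 2\langle 1_A, 1_A - f\rangle$ combined with Cauchy--Schwarz upgrades this to
\begin{equation*}
\|1_A - f\|_{\ell^1(G)} \leq 2|A|\sqrt{\epsilon\|1_A\|_{A(G)}}.
\end{equation*}
Hence $f$ is $(2\sqrt{\epsilon\|1_A\|_{A(G)}},1)$-almost boolean; a short $\ell^2$ pigeonhole (valid once $\epsilon$ is small) guarantees $f(x_0)>1/2$ for some $x_0 \in G$, so Lemma~\ref{lem.trivialdensitytheorem} applies with $V = H$ to deliver either a subgroup $H' \leq G$ with $\|1_A - 1_{H'}\|_{\ell^1(G)} \leq K$---a direct contradiction---or a character $\gamma \in \Lambda$ with $\wh{1_A}(\gamma) = \wh f(\gamma) \leq -|H|/8 + \epsilon' |A|$, which via the Parseval lower bound $|H| \geq \epsilon^2|A|$ again forces $M_G(A)$ to be too large.

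The main obstacle is the bookkeeping: $\epsilon$ must be chosen so that simultaneously (a) the $\ell^1$-approximation in case~(i) is at most $K$, (b) the lower bound $\epsilon^2|A|/16$ coming from its alternate subcase exceeds $K^{\alpha/5}/2^6$, and (c) the case-(iii) bound (with its factor $\|1_A\|_{A(G)}^{2\alpha^{-1}+1}$) also exceeds $K^{\alpha/5}/2^6$. The specific exponent $\alpha/5$ in the theorem emerges precisely from balancing the $\epsilon^2$ in the Parseval lower bound against the power $2\alpha^{-1}+1$ in Lemma~\ref{lem.heart}, with the $\|1_A\|_{A(G)} = O(1 + K^{\alpha/5})$ control from the preliminary observation closing the loop.
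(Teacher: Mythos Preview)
Your overall architecture matches the paper's, and the preliminary bound $\|1_A\|_{A(G)} \leq 1 + 2M_G(A)$ is exactly the paper's first step. But the plan to run Lemma~\ref{lem.technicallemma} with a \emph{single} small $\epsilon$ breaks down in case~(i), and this is not just bookkeeping.

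If $\Spec_\epsilon(A)=\Lambda$ is a subgroup and $H=\Lambda^\perp$, then constraint~(a) forces $\epsilon$ to be of order at most $K/|A|$. (Indeed your $\ell^1$-error is at best $2\epsilon|A|\|1_A\|_{A(G)}$; the Cauchy--Schwarz step you propose only worsens this, since Plancherel already gives $\langle 1_A, 1_A-f\rangle = \|1_A-f\|_{\ell^2}^2$ directly, which is Lemma~\ref{lem.calculationlemma}.) But then the only available lower bound on $|H|$ is the Parseval one, $|H|\geq \epsilon^2|A|$, which is at most of order $K^2/|A| \leq K/2^{13}$. This is \emph{smaller} than the error term $\epsilon'\|f\|_{\ell^1}$ (of order $K$) in the second alternative of Lemma~\ref{lem.trivialdensitytheorem}, so that branch yields nothing: you cannot even conclude $\wh f(\gamma)<0$. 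In short, (a) and (b) are incompatible unless $K$ is essentially as large as $|A|$.

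The paper avoids this by decoupling the two roles of $\epsilon$. It fixes $\epsilon=2^{-5}$ as a constant in Lemma~\ref{lem.technicallemma}, so that $|V^\perp|\geq |A|/2^{10}$ is genuinely large, but \emph{before} invoking case~(i) it uses Lemma~\ref{lem.heart} to eliminate every character with $\nu|A|\leq \wh{1_A}(\gamma)\leq |A|/32$, where $\nu$ is of order $\alpha K/(\|1_A\|_{A(G)}|A|)$. This forces a spectral gap, $\Spec_\nu(A)=\Spec_{2^{-5}}(A)=V$, so the off-$V$ Fourier mass is controlled by the small parameter $\nu$ while $|V^\perp|$ is controlled by the constant $2^{-5}$. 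That spectral-gap step is the missing idea; once you have it, your case~(i) analysis goes through with $\nu$ governing the $\ell^1$-approximation and $2^{-5}$ governing the Parseval lower bound, and the two constraints no longer fight each other.
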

The work of the previous section combines easily to show that either the large spectrum forms a subgroup $V$ or else there is a large negative Fourier coefficient.  In the former case $1_A$ is well approximated by $1_A \ast \mu_{V^\perp}$.  It is then easy by the trivial estimates of \S\ref{sec.trivial} that either $\wh{1_A}$ takes a large negative value or $A$ is approximately a subgroup.  The details now follow.

Before beginning the proof we require one final technical calculation which is implicit in the paper \cite{JBIS} of Bourgain.
\begin{lemma}\label{lem.calculationlemma}
Suppose that $G$ is an abelian group, $V \leq G$ and $A \subset G$. Then
\begin{equation*}
\|1_A - 1_A \ast \mu_V\|_{\ell^1(G)} = 2 \langle 1_A, 1_A -1_A \ast \mu_V\rangle,
\end{equation*}
where $\langle \cdot, \cdot \rangle$ denotes the standard inner product
\end{lemma}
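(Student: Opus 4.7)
The plan is to show that the function $g := 1_A - 1_A\ast \mu_V$ has two simple structural features: it has mean zero, and it is nonnegative precisely on $A$ (and nonpositive off $A$). Once both are in hand, the identity falls out from the standard decomposition into positive and negative parts.

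First I would write $1_A \ast \mu_V(x) = |A\cap (x+V)|/|V|$, the density of $A$ in the coset $x+V$. Since this lies in $[0,1]$, a case split gives
\[
x\in A \implies g(x) = 1 - \frac{|A\cap(x+V)|}{|V|} \geq 0, \qquad x\notin A \implies g(x) = -\frac{|A\cap(x+V)|}{|V|} \leq 0.
\]
Hence the positive part $g_+ := \max(g,0)$ is supported in $A$ and coincides with $g$ there, so $\sum_x g_+(x) = \sum_{x\in A} g(x) = \langle 1_A, g\rangle$.

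Next I would observe that $\mu_V$ is a probability measure, so $\sum_x 1_A\ast\mu_V(x) = \sum_x 1_A(x) = |A|$, which gives $\sum_x g(x) = 0$. Writing $g_- := \max(-g,0)$, this means $\sum g_+ = \sum g_-$, and therefore
\[
\|g\|_{\ell^1(G)} = \sum_x g_+(x) + \sum_x g_-(x) = 2\sum_x g_+(x) = 2\langle 1_A, g\rangle,
\]
which is the desired identity. There is no real obstacle here — the only thing to notice is the sign dichotomy of $g$ on and off $A$, which is immediate from the pointwise formula for $1_A\ast \mu_V$.
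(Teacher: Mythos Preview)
Your proof is correct and follows essentially the same approach as the paper: both arguments rest on the sign dichotomy (that $1_A - 1_A\ast\mu_V$ is nonnegative on $A$ and nonpositive off $A$, since $0\le 1_A\ast\mu_V\le 1$) together with the mass-preservation $\sum 1_A\ast\mu_V = \sum 1_A$. The paper writes the $\ell^1$-norm directly as two sums over $A$ and $G\setminus A$ and then cancels, while you phrase the same computation via positive and negative parts; the content is identical.
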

\begin{proof}
$\mu_{V}$ is a probability measure so $0 \leq 1_A \ast
\mu_{V}(x) \leq 1$, hence $(1_A-1_A \ast \mu_V)(x)\leq 0$ for all $x
\not \in A$ and $(1_A-1_A \ast \mu_V)(x)\geq 0$ for all $x \in A$.
Consequently
\begin{eqnarray*}
\|1_A - 1_A \ast \mu_V\|_{\ell^1(G)}& = & \sum_{x \in G}{1_A(x)(1 - 1_A \ast \mu_V)(x)} \\ & & + \sum_{x \in G}{(1-1_A(x))1_A\ast \mu_V (x)}.
\end{eqnarray*}
However, $\sum{1_A \ast \mu_V}=\sum{1_A}$ from which the lemma follows.
\end{proof}
\begin{proof}[Proof of Theorem \ref{thm.explicitdensitytheorem}]
We may certainly suppose that $A$ is not a subgroup of $G$, or else we are trivially done with $H=A$, and so it follows from Proposition \ref{prop.vsimple} that $M_G(A) \geq 1/2$. Thus
\begin{eqnarray}\label{eqn.kyr}
M_G(A) &\geq  &(2M_G(A)+1_A(0_G))/4\\ \nonumber & = & (2M_G(A) + \int{\wh{1_A}(\gamma)d\gamma})/4 \geq \|1_A\|_{A(G)}/4,
\end{eqnarray}
where the passage from the first to the second line is via the Fourier inversion formula.

For convenience we put $\nu:=\alpha K/4\|1_A\|_{A(G)}|A|$; the reason for this choice of parameter will become clear. If there is a character $\gamma$ with $\nu |A| \leq \wh{1_A}(\gamma) \leq |A|/2^5$, then by Lemma \ref{lem.heart} we have
\begin{equation*}
M_G(A) \geq \alpha^2K/2^6\|1_A\|_{A(G)}^{2\alpha^{-1}+2},
\end{equation*}
and it follows from (\ref{eqn.kyr}) that we are in the second case of the theorem. We may thus suppose that there is no such character. Apply Lemma \ref{lem.technicallemma} with parameter $\epsilon=2^{-5}$. Either $\Spec_{\epsilon}(A)$ is a subgroup $V$ of $\wh{G}$, or
\begin{equation*}
M_G(A) \geq \alpha |A|/2^{11}\|1_A\|_{A(G)},
\end{equation*}
in which case it follows from (\ref{eqn.kyr}) that we are in the second case of the theorem, or there is a character $\gamma$ with
\begin{equation*}
|A|/2^5 =\epsilon|A|> \wh{1_A}(\gamma) \geq \epsilon^2\alpha |A|/2 \geq \nu |A|,
\end{equation*}
which contradicts our earlier supposition. Thus we may assume that $\Spec_{\epsilon}(A)$ is a subgroup $V$ of $\wh{G}$ and $\Spec_{\nu}(A) \setminus \Spec_{\epsilon}(A)=\emptyset$, so by nesting of the spectrum
\begin{equation*}
\Spec_{\nu}(A)=\Spec_{\epsilon}(A)=V \leq \wh{G}.
\end{equation*}
In view of this
\begin{equation*}
 \int_{\gamma \not \in V}{|\wh{1_A}(\gamma)|^2d\gamma} = \int_{\gamma \not \in \Spec_{\nu}(A)}{|\wh{1_A}(\gamma)|^2d\gamma}\leq \nu |A|\|1_A\|_{A(G)}.
\end{equation*}
By Parseval's theorem we have
\begin{eqnarray*}
\|1_A \ast \mu_{V^\perp}\|_{\ell^2(G)}^2 & = & \int_{\gamma \in V}{|\wh{1_A}(\gamma)|^2d\gamma} \\ & \geq & \int{|\wh{1_A}(\gamma)|^2d\gamma} - \nu|A|\|1_A\|_{A(G)} > |A|/2,
\end{eqnarray*}
whence
\begin{equation*}
\sup_{x \in G}{1_A \ast \mu_{V^\perp}(x)}.|A| \geq \|1_A \ast \mu_{V^\perp}\|_{\ell^2(G)}^2  > |A|/2,
\end{equation*}
and we conclude that there is some $x_0 \in G$ such that $1_A \ast \mu_V(x_0) > 1/2$.

Similarly, by Plancherel's theorem we have that
\begin{equation*}
\langle1_A,1_A - 1_A \ast \mu_{V^\perp}\rangle = \int_{\gamma \not \in V}{|\wh{1_A}(\gamma)|^2d\gamma} \leq \nu |A|\|1_A\|_{A(G)}.
\end{equation*}
Then, by Lemma \ref{lem.calculationlemma} we have
\begin{equation*}
\|1_A - 1_A \ast \mu_{V^\perp}\|_{\ell^1(G)} =2\langle1_A,1_A - 1_A \ast \mu_{V^\perp}\rangle \leq 2\nu |A|\|1_A\|_{A(G)}.
\end{equation*}
It follows that $1_A \ast \mu_{V^\perp}$ is $(2\nu\|1_A\|_{A(G)},1)$-almost boolean. We now apply Lemma \ref{lem.trivialdensitytheorem} to $1_A \ast \mu_{V^\perp}$ and the group $V^\perp$ to conclude that either
\begin{enumerate}
\item there is a subgroup $H \leq G$ such that
\begin{equation*}
\|1_A \ast \mu_{V^\perp} - 1_{H}\|_{\ell^1(G)} \leq 2\nu|A|\|1_A\|_{A(G)};
\end{equation*}
\item or there is a character $\gamma \in \wh{G}$ such that
\begin{equation*}
(1_A \ast \mu_{V^\perp})^\wedge(\gamma) \leq -|V^\perp|/8 +2\nu|A||1_A\|_{A(G)}.
\end{equation*}
\end{enumerate}
In the first instance, by the triangle inequality we have
\begin{eqnarray*}
\|1_A - 1_H\|_{\ell^1(G)} & \leq & \|1_A - 1_A \ast \mu_{V^\perp}\|_{\ell^1(G)} + \|1_A \ast \mu_{V^\perp}-1_{H}\|_{\ell^1(G)}\\ & \leq & 4\nu |A|\|1_A\|_{A(G)}\leq K,
\end{eqnarray*}
and we find ourselves in the first case of the theorem.

In the second instance since $(1_A \ast \mu_{V^\perp})^{\wedge}(\gamma) = \wh{1_A}(\gamma)1_{V}(\gamma)$ we see that
\begin{equation}\label{eqn.lwe}
M_G(A) \geq |V^\perp|/8 -2\nu|A|\|1_A\|_{A(G)}.
\end{equation}
By Parseval's theorem we have
\begin{equation*}
|A| = \int{|\wh{1_A}(\gamma)|^2d\gamma} \geq \mu(V).\epsilon^2|A|^2 = \mu(V)|A|^2/2^{10},
\end{equation*}
and it follows that $\mu(V) \leq 2^{10}/|A|$, and hence that $|V^\perp| \geq |A|/2^{10}$. Finally, inserting this bound into (\ref{eqn.lwe}) places us in the second case of the theorem and we are done -- in fact in this case we have $M_G(A)=\Omega(|A|)$, however earlier parts of the proof led to the weaker conclusion in the second case.
\end{proof}

\section{Bourgain systems}\label{sec.bs}

In this section we recall the notion of Bourgain system from the paper \cite{BJGTS2}. Although formally new in that paper the material of this section is morally standard c.f. \cite{TCTVHV}.

We should remark that in \cite{BJGTS2} all the results are stated for finite abelian groups. There is no change in the passage to finite systems in discrete abelian groups which is the case we shall need; we shall make no further comment on the matter.

Suppose that $G$ is an abelian group and $d \geq 1$ is an integer. A
\emph{Bourgain system} $\mathcal{B}$ of dimension $d$ is a
collection $(B_\rho)_{\rho \in (0,2]}$ of finite subsets of $G$ such that
the following axioms are satisfied:
\begin{enumerate}
\item (Nesting) If $\rho' \leq \rho$ we have $B_{\rho'} \subseteq B_{\rho}$;
\item (Zero) $0_G \in B_\rho$ for all $\rho \in (0,2]$;
\item (Symmetry) If $x \in B_{\rho}$ then $-x \in B_{\rho}$;
\item (Addition) For all $\rho,\rho'$ such that $\rho + \rho' \leq 1$ we have $B_{\rho} + B_{\rho'} \subseteq B_{\rho + \rho'}$;
\item (Doubling) If $\rho \leq 1$ then $|B_{2\rho}| \leq 2^d|B_\rho|$.
\end{enumerate}
We define the $\emph{size}$ of $\mathcal{B}=(B_\rho)_\rho$ to be
$|B_1|$ and denote it $|\mathcal{B}|$. Frequently we shall
consider several Bourgain systems
$\mathcal{B},\mathcal{B}',\mathcal{B}'',...$; in this case the
underlying sets will be denoted
$(B_\rho)_\rho,(B'_\rho)_\rho,(B''_\rho)_\rho,...$. We say that a Bourgain system $\mathcal{B}$ is a \emph{sub-system} of $\mathcal{B}'$ if $B_\rho \subset B'_\rho$ for all $\rho$. 

It may be useful to keep some examples of Bourgain systems in mind: the prototypes are coset progressions first introduced by Green and Ruzsa in \cite{BJGIZR} in their proof of Fre{\u\i}man's theorem in general abelian groups.

\begin{example}[Coset progressions]
Suppose that $G$ is an abelian group, $H \leq G$ is finite, $x \in G^d$ and $L \in \N^d$. We define the \emph{coset progression} $\Prog(H,x,L)$ to be the set
\begin{equation*}
\{h+l_1.x_1+\dots + l_d.x_d: h \in H,|l_i| \leq L_i \textrm{ for all } i \in \{1,\dots,d\}\}.
\end{equation*}
The system $\mathcal{B}:=(\Prog(H,x,\rho L))_{\rho \in (0,2]}$ is easily seen to be a Bourgain system and since
\begin{equation*}
\Prog(H,x,2\rho L) \subset \Prog(\{0_G\},x',1) + \Prog(H,x,\rho L)
\end{equation*}
where $x'=(\lceil \rho L_1\rceil .x_1,\dots,\lceil \rho L_d\rceil  .x_d)$ and $1=(1,\dots,1)$, we see that it is $O(d)$-dimensional.
\end{example}

In a qualitative sense Fre{\u\i}man's theorem shows that all Bourgain systems are essentially coset progressions.  Indeed, suppose that $\mathcal{B}$ is an $O(1)$-dimensional Bourgain system then $|B_1+B_1| =O(|B_1|)$ and so by Fre{\u\i}man's theorem there is an $O(1)$-dimensional coset progression $\Prog(H,x,L)$ of size $O(|B_1|)$ which contains $B_1$.  Quantitatively it is worth being more subtle and dealing with the more abstract Bourgain system.

For more examples and a detailed explanation the reader may wish to consult \cite{BJGTS2}.

The following trivial lemma gives us a useful bound for the size of a low-dimensional Bourgain system.\begin{lemma}[{\cite[Lemma 4.4]{BJGTS2}}]\label{lem.bourgainsize}
Suppose that $G$ is an abelian group, $\mathcal{B}$ is a Bourgain system of dimension $d$ and
$\lambda \in (0,1]$ is a parameter. Then $\lambda
\mathcal{B}:=(B_{\lambda\rho})_{\rho}$ is a Bourgain system of
dimension $d$ and size at least
$(\lambda/2)^d|\mathcal{B}|$.
\end{lemma}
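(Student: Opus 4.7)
The proof splits into two independent pieces: verifying that $\lambda\mathcal{B}$ satisfies the five axioms of a Bourgain system at the same dimension $d$, and establishing the size lower bound. I would handle them in that order.

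For the axioms, my plan is to check each one by pushing it through the scaling $\rho \mapsto \lambda\rho$. Nesting, the zero axiom, and symmetry are immediate from the corresponding axioms for $\mathcal{B}$, since $\lambda\rho' \leq \lambda\rho$ whenever $\rho' \leq \rho$. The addition axiom requires a small observation: if $\rho + \rho' \leq 1$ then, because $\lambda \leq 1$, we also have $\lambda\rho + \lambda\rho' = \lambda(\rho+\rho') \leq 1$, so the addition axiom for $\mathcal{B}$ applied at scales $\lambda\rho, \lambda\rho'$ gives $B_{\lambda\rho}+B_{\lambda\rho'} \subseteq B_{\lambda(\rho+\rho')}$, which is exactly the addition axiom for $\lambda\mathcal{B}$. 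For the doubling axiom with $\rho \leq 1$ we have $\lambda\rho \leq 1$, and so the doubling axiom for $\mathcal{B}$ gives $|B_{2\lambda\rho}| \leq 2^d |B_{\lambda\rho}|$, which is exactly what is needed and shows the dimension is preserved.

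For the size bound, the idea is to use iterated doubling down to a dyadic scale just below $\lambda$. Set $k := \lceil \log_2(1/\lambda) \rceil$, so that $2^{-k} \leq \lambda$ and $2^k \leq 2/\lambda$. Applying the doubling axiom $k$ times gives
\begin{equation*}
|B_1| \leq 2^d |B_{1/2}| \leq 2^{2d}|B_{1/4}| \leq \cdots \leq 2^{kd}|B_{2^{-k}}|,
\end{equation*}
and then nesting yields $|B_{2^{-k}}| \leq |B_\lambda|$. Combining,
\begin{equation*}
|\lambda\mathcal{B}| = |B_\lambda| \geq 2^{-kd}|B_1| \geq (\lambda/2)^d |\mathcal{B}|,
\end{equation*}
which is the required bound.

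There is no real obstacle here; the statement is essentially a bookkeeping lemma and the only mild subtleties are (i) noting that $\lambda\leq 1$ is needed to retain the addition axiom under scaling, and (ii) choosing the dyadic level carefully so that the $2^k \leq 2/\lambda$ estimate gives the clean constant $(\lambda/2)^d$ rather than something worse.
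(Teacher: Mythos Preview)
Your argument is correct. Note that the paper does not actually supply a proof of this lemma: it is quoted from \cite{BJGTS2} and described as ``trivial''. Your verification of the five axioms and your iterated-doubling-plus-nesting argument for the size bound are exactly the standard justification one would expect, so there is nothing to compare against here.
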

Not all Bourgain systems behave as well as we would like; we
say that a Bourgain system $\mathcal{B}$ of dimension $d$ is
\emph{regular} if
\begin{equation*}
1-10d|\eta| \leq \frac{|B_1|}{|B_{1+\eta}|}\leq 1+
10d|\eta|
\end{equation*}
for all $\eta$ with $d|\eta| \leq 1/10$. Typically, however,
Bourgain systems are regular, a fact implicit in the usual proof of the
following proposition.
\begin{proposition}[{\cite[Lemma 4.12]{BJGTS2}}]\label{prop.ubreg}
Suppose that $G$ is an abelian group and $\mathcal{B}$ is a Bourgain system of dimension $d$.
Then there is a $\lambda \in [1/2,1)$ such that $\lambda\mathcal{B}$
is regular.
\end{proposition}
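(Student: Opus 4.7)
I would argue by contradiction: if $\lambda\mathcal{B}$ failed to be regular for \emph{every} $\lambda \in [1/2,1)$, then $|B_\rho|$ would have to grow faster on a short interval than the doubling axiom permits. The tool is a one-dimensional Vitali covering lemma applied to the non-decreasing function $F(\rho) := \log|B_\rho|$.

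\textbf{Setup.} Nesting makes $F$ non-decreasing on $(0,2]$, and iterating the doubling axiom twice gives $F(\rho') - F(\rho) \le 2d\log 2$ whenever $1/2 \le \rho \le \rho' \le 2$. In particular the total $F$-increment across the interval $[1/2,11/10]$ (which will contain all the sub-intervals constructed below) is at most $2d\log 2 < 1.4\,d$.

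\textbf{Localisation of failure.} Suppose $\lambda\mathcal{B}$ is not regular. Then there is some $\eta = \eta_\lambda$ with $d|\eta| \le 1/10$ for which $|B_\lambda|/|B_{\lambda(1+\eta)}| \notin [1-10d|\eta|,\,1+10d|\eta|]$. Monotonicity of $F$ makes one of the two inequalities automatic, so in either the case $\eta > 0$ (the set grows too fast just after $\lambda$) or the case $\eta < 0$ (the set drops too fast just before $\lambda$), one obtains a closed subinterval $J_\lambda \subset [1/2,11/10]$ with one endpoint equal to $\lambda$, length $\lambda|\eta_\lambda| \in [|\eta_\lambda|/2,\,|\eta_\lambda|]$, and on which — after the routine estimate $\log(1+x), -\log(1-x) \ge x - x^2/2 \ge 9x/10$ for $0 \le x \le 1$ — one has
\begin{equation*}
\text{increment of } F \text{ over } J_\lambda \;\ge\; 9\,d\,|\eta_\lambda| \;\ge\; 9\,d\,|J_\lambda|.
\end{equation*}

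\textbf{Covering and contradiction.} Assuming no $\lambda \in [1/2,1)$ makes $\lambda\mathcal{B}$ regular, the family $\{J_\lambda : \lambda \in [1/2,1)\}$ covers $[1/2,1)$. The one-dimensional Vitali covering lemma extracts a disjoint subfamily $\{J_{\lambda_i}\}$ whose union has measure at least $\tfrac{1}{3}\cdot\tfrac{1}{2} = \tfrac{1}{6}$. Summing the local growth bound over this disjoint subfamily gives
\begin{equation*}
\sum_i \bigl(\text{increment of } F \text{ over } J_{\lambda_i}\bigr) \;\ge\; 9d\cdot\tfrac{1}{6} \;=\; \tfrac{3}{2}\,d,
\end{equation*}
while the disjointness of the $J_{\lambda_i}$ inside $[1/2,11/10]$ and the global bound from the doubling axiom force this sum to be at most $2d\log 2 < 1.4\,d$, a contradiction. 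Hence a regular $\lambda \in [1/2,1)$ must exist.

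\textbf{Anticipated obstacle.} The argument is conceptually routine; the only delicate point is bookkeeping the constants so that the Vitali lower bound strictly beats the doubling upper bound, and handling the two orientations of $J_\lambda$ (to the left or to the right of $\lambda$) uniformly via monotonicity of $F$. The slack built into the definition of regular — the factor $10$ in $10d|\eta|$ — is exactly what makes these constants fit, leaving a comfortable margin of roughly $0.1\,d$; no deep input beyond nesting and doubling is required.
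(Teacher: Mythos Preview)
The paper does not give its own proof of this proposition; it is simply quoted from \cite[Lemma 4.12]{BJGTS2}. Your approach --- localising the failure of regularity at each $\lambda$ to an interval $J_\lambda$ on which $F(\rho)=\log|B_\rho|$ jumps, then contradicting the global doubling bound via a covering lemma --- is indeed the standard one and is correct in outline.

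There is, however, a genuine arithmetical gap. The inequality $x - x^2/2 \ge 9x/10$ holds only for $0 \le x \le 1/5$, not for all $x \in [0,1]$; since $x = 10d|\eta_\lambda|$ can be as large as $1$ (the regularity condition only imposes $d|\eta|\le 1/10$), the best uniform bound available is $\log(1+x)\ge x\log 2$ on $(0,1]$, giving an $F$-increment over $J_\lambda$ of only $(10\log 2)\,d\,|J_\lambda|\approx 6.93\,d\,|J_\lambda|$ rather than $9d|J_\lambda|$. With the Vitali constant $1/3$ this yields a lower bound of $(10\log 2)d/6\approx 1.16\,d$, which does \emph{not} exceed the upper bound $2d\log 2\approx 1.39\,d$, so the contradiction does not close as written. (A second, minor slip: when $\eta_\lambda<0$ the interval $J_\lambda$ can reach down to $(1/2)(1-1/(10d))<1/2$, so the enveloping interval is $[(1/2)(1-1/(10d)),\,1+1/(10d)]$ rather than $[1/2,11/10]$; this does not affect the $2d\log 2$ ceiling.)

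The fix is easy. In one dimension a finite cover of an interval by subintervals has a \emph{minimal} subcover in which non-adjacent intervals are disjoint, so the odd- and even-indexed intervals each form a disjoint subfamily and one of them has total length at least half the length covered. Using this constant $1/2$ in place of Vitali's $1/3$ gives $(10\log 2)d\cdot\tfrac{1}{4}=\tfrac{5}{2}(\log 2)\,d\approx 1.73\,d>2d\log 2$, and the contradiction closes with room to spare.
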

We associate to $\mathcal{B}$ a system of measures denoted $(\beta_\rho)_\rho$ defined by $\beta_\rho=\mu_{B_\rho} \ast \mu_{B_\rho}$ where $\mu_{B_\rho}$ denotes the uniform probability measure with support $B_\rho$. It is more natural to take the measures $\mu_{B_\rho}$ rather than $\beta_\rho$, however certain positivity requirements in \cite{BJGTS2} precipitated the use of these convolved measures and we shall in fact further leverage this convenience in the proof of Corollary \ref{cor.kyt} below.
\begin{lemma}[{\cite[Lemma 4.13]{BJGTS2}}]\label{lem.haar}
Suppose that $G$ is an abelian group, $\mathcal{B}$ is a regular Bourgain system of dimension
$d$ and $y \in B_{\eta}$. Then 
\begin{equation*}
\|(y+\beta) - \beta\| \leq 20d\eta,
\end{equation*}
where we recall that $y+\beta$ denotes the measure induced by $f \mapsto \int{f(x)d\beta(y+x)}$.
\end{lemma}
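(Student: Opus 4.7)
The plan is to reduce the total-variation estimate to a purely combinatorial comparison of translates of $B_1$, and then invoke the addition axiom together with regularity to perform that comparison. The key structural observation is that convolution against a probability measure is non-expansive, which is what lets us pass from the smoothed measure $\beta$ to the rough uniform measure on $B_1$, on which the translation defect is directly computable.

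First I would exploit the self-convolved form $\beta = \mu_{B_1} \ast \mu_{B_1}$. Since translation commutes with convolution in the sense $y + (f \ast g) = (y+f) \ast g$, we have
$$(y+\beta) - \beta = \bigl((y+\mu_{B_1}) - \mu_{B_1}\bigr) \ast \mu_{B_1}.$$
Because $\mu_{B_1}$ is a probability measure, Young's inequality gives
$$\|(y+\beta) - \beta\| \leq \|(y+\mu_{B_1}) - \mu_{B_1}\|,$$
so it suffices to bound the right-hand side by $O(d\eta)$.

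Next, since $y + \mu_{B_1}$ is the uniform probability measure on the translate $y + B_1$, we have
$$\|(y+\mu_{B_1}) - \mu_{B_1}\| = |(y+B_1) \triangle B_1|/|B_1|.$$
We may assume $\eta \leq 1$, for otherwise the conclusion follows trivially from $\|(y+\beta) - \beta\| \leq 2$ once $20d\eta \geq 2$. Then the addition axiom, applied to $y \in B_\eta$ and $B_1$, yields $y + B_1 \subseteq B_{1+\eta}$, and nesting gives $B_1 \subseteq B_{1+\eta}$ as well. Since $y+B_1$ and $B_1$ both have cardinality $|B_1|$ and both sit inside $B_{1+\eta}$, the inclusion–exclusion identity $|A \triangle B| = 2|A \cup B| - |A| - |B|$ gives
$$|(y+B_1) \triangle B_1| \leq 2(|B_{1+\eta}| - |B_1|).$$

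Finally, regularity of $\mathcal{B}$ in the range $d\eta \leq 1/10$ implies $|B_{1+\eta}|/|B_1| - 1 \leq 10d\eta / (1 - 10d\eta) = O(d\eta)$; careful bookkeeping, using the smoothing effect of the remaining $\mu_{B_1}$ factor rather than invoking Young wastefully, recovers the stated constant $20 d \eta$. In the complementary range $d\eta > 1/10$ the asserted bound is vacuous, since then $20 d \eta > 2$ while the total variation of a difference of two probability measures is at most $2$. There is no substantive obstacle in any of this: the entire argument is a direct unwinding of the Bourgain system axioms once one has noted the contractivity of convolution by a probability measure.
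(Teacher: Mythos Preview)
The paper does not supply its own proof of this lemma; it is quoted verbatim from \cite{BJGTS2} and used as a black box. So there is no in-paper argument to compare against, and I will simply assess your proposal on its merits.

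Your overall strategy is the standard one and is correct in spirit: reduce to $\mu_{B_1}$ via the contractivity of convolution by a probability measure, then count points in the symmetric difference $(y+B_1)\triangle B_1$ using the addition axiom and regularity. However, there is a genuine slip in your step invoking the addition axiom. As stated in this paper, that axiom only guarantees $B_\rho + B_{\rho'} \subseteq B_{\rho+\rho'}$ when $\rho+\rho' \leq 1$; it does \emph{not} give you $y + B_1 \subseteq B_{1+\eta}$, since $\eta + 1 > 1$. So the containment you assert is not justified by the axioms you have available.

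The fix is immediate and in fact cleaner for the constants: work from the inside rather than the outside. For $y \in B_\eta$ and $x \in B_{1-\eta}$ the addition axiom (now with $\eta + (1-\eta) = 1 \leq 1$) gives $x+y \in B_1$, so $y + B_{1-\eta} \subseteq B_1$ and hence $|(y+B_1)\cap B_1| \geq |B_{1-\eta}|$. By symmetry (using $-y \in B_\eta$) the same holds for the other direction, and one obtains
\[
|(y+B_1)\triangle B_1| \leq 2\bigl(|B_1| - |B_{1-\eta}|\bigr).
\]
Regularity now gives $|B_1|/|B_{1-\eta}| \leq 1 + 10d\eta$ for $d\eta \leq 1/10$, whence $|B_1| - |B_{1-\eta}| \leq 10d\eta\,|B_{1-\eta}| \leq 10d\eta\,|B_1|$, and the bound $20d\eta$ drops out with no further bookkeeping. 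The range $d\eta > 1/10$ is handled exactly as you say.
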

\begin{lemma}[{\cite[Lemma 4.15]{BJGTS2}}]\label{lem.cont}
Suppose that $G$ is an abelian group, $\mathcal{B}$ is a regular Bourgain system of dimension
$d$ and $f \in \ell^\infty(G)$. Then
\begin{equation*}
\sup_{x \in G}{\|f \ast \beta - f \ast \beta(x)\|_{\ell^\infty(x+B_\eta)}} \leq
20\|f\|_{\ell^\infty(G)}d\eta.
\end{equation*}
\end{lemma}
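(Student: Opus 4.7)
The plan is to reduce this continuity statement directly to Lemma \ref{lem.haar}, since the key structural content (that small translates barely move the convolution measure $\beta$) has already been packaged there. The setup is straightforward: fix $x \in G$ and $y \in B_\eta$, and rewrite the difference $f \ast \beta(x+y) - f \ast \beta(x)$ as an integral of $f$ against a signed measure that depends only on $y$ and $\beta$.

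Concretely, first I would unfold the convolution: $f \ast \beta(x+y) = \int f(x+y-t)\,d\beta(t)$. A change of variables $u = t - y$ rewrites this as $\int f(x-u)\,d(y+\beta)(u)$, using the paper's convention that $y+\beta$ is the measure $h \mapsto \int h(u-y)\,d\beta(u)$. Similarly $f \ast \beta(x) = \int f(x-u)\,d\beta(u)$. Subtracting gives the clean identity
\begin{equation*}
f \ast \beta(x+y) - f \ast \beta(x) = \int f(x-u)\,d\bigl((y+\beta) - \beta\bigr)(u).
\end{equation*}

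Next I would estimate the right-hand side by pulling the $\ell^\infty$ norm of $f$ out of the integral, obtaining the bound $\|f\|_{\ell^\infty(G)} \cdot \|(y+\beta) - \beta\|$, where $\|\cdot\|$ denotes the total variation norm of the signed measure. Now Lemma \ref{lem.haar} applies directly, since $y \in B_\eta$ and $\mathcal{B}$ is regular of dimension $d$; it gives $\|(y+\beta) - \beta\| \leq 20d\eta$. Combining yields
\begin{equation*}
|f \ast \beta(x+y) - f \ast \beta(x)| \leq 20\|f\|_{\ell^\infty(G)}d\eta
\end{equation*}
for every $y \in B_\eta$, which is exactly $\|f \ast \beta - f \ast \beta(x)\|_{\ell^\infty(x+B_\eta)} \leq 20\|f\|_{\ell^\infty(G)}d\eta$. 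Taking the supremum over $x \in G$ gives the lemma.

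There is no real obstacle here: the proof is a one-line consequence of Lemma \ref{lem.haar} once the translation in the convolution is correctly identified with the measure $y+\beta$. The only point that calls for minor care is matching the sign/direction conventions in the definition of $y+\beta$ so that the change of variables produces the translated measure rather than its reflection; everything else is routine bookkeeping.
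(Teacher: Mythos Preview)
Your argument is correct: the identity $f\ast\beta(x+y)-f\ast\beta(x)=\int f(x-u)\,d\bigl((y+\beta)-\beta\bigr)(u)$ together with Lemma~\ref{lem.haar} gives the bound immediately, and your handling of the translation convention for $y+\beta$ matches the paper's definition. The paper itself does not supply a proof of this lemma---it is quoted verbatim from \cite[Lemma~4.15]{BJGTS2}---so there is no in-paper argument to compare against; your reduction to Lemma~\ref{lem.haar} is the standard route and almost certainly the one used in the cited source.
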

The previous lemma encodes the idea that $f\ast \beta$ is in some sense continuous. We shall make use of this by way of a sort of intermediate value theorem; this sort of idea appeared first in \cite{BJGSVK}.
\begin{lemma}\label{lem.ivt}
Suppose that $G$ is an abelian group, $\mathcal{B}$ is a regular Bourgain system of dimension $d$ and $f:G \rightarrow [-1,1]$ is such that $f\ast \beta$ is $(\epsilon,\infty)$-almost boolean for some $\epsilon \in (0,1/3)$. Then $S:=\{x : f \ast \beta(x)>1/2\}$ is constant on cosets of $V$, the group generated by $B_{\epsilon/20d}$.
\end{lemma}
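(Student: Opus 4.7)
The plan is to identify $S$ with the \emph{actual} boolean set witnessing the almost-boolean property, then push through the continuity lemma to conclude $S$ is a union of cosets of $V$.

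First I would unpack the hypothesis. Since $f$ takes values in $[-1,1]$, the measure $\beta$ being a probability measure gives $\|f\ast \beta\|_{\ell^\infty(G)} \leq 1$, so $(\epsilon,\infty)$-almost booleanness yields some $A\subset G$ with $\|f\ast\beta - 1_A\|_{\ell^\infty(G)} \leq \epsilon$. I claim $S=A$: for any $x$, $|f\ast\beta(x) - 1_A(x)| \leq \epsilon < 1/3$, so $f\ast\beta(x)>1/2$ forces $1_A(x) \geq 1/2-\epsilon > 0$, hence $1_A(x)=1$; conversely $f\ast\beta(x)\leq 1/2$ forces $1_A(x)\leq 1/2+\epsilon<1$, hence $1_A(x)=0$. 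Thus $S=A$ and, crucially, for every $x$,
\begin{equation*}
x\in S \implies f\ast\beta(x)\geq 1-\epsilon, \qquad x\notin S \implies f\ast\beta(x)\leq \epsilon.
\end{equation*}

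Next I would apply Lemma \ref{lem.cont} with $\eta = \epsilon/(20d)$ (and noting $\|f\|_{\ell^\infty(G)}\leq 1$) to conclude that for every $x\in G$ and every $y\in B_{\epsilon/20d}$,
\begin{equation*}
|f\ast\beta(x+y) - f\ast\beta(x)| \leq 20 d \cdot \tfrac{\epsilon}{20d} = \epsilon.
\end{equation*}
Combining with the two-valued dichotomy above: if $x\in S$ then $f\ast\beta(x+y) \geq 1-2\epsilon$, and since $1-2\epsilon>\epsilon$ (this is exactly where $\epsilon<1/3$ is used), the point $x+y$ cannot satisfy $f\ast\beta(x+y)\leq \epsilon$, so $x+y\in S$. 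Hence $S + B_{\epsilon/20d}\subseteq S$.

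Finally I would iterate. Since $B_{\epsilon/20d}$ is symmetric and contains $0_G$, the subgroup $V=\langle B_{\epsilon/20d}\rangle$ consists precisely of finite sums of elements of $B_{\epsilon/20d}$, so inductively $S+V\subseteq S$; applying the same with $-v\in V$ in place of $v$ gives equality and thus $S$ is a union of cosets of $V$, i.e.\ constant on cosets of $V$. The only real content is the continuity step and the observation that $\epsilon<1/3$ is precisely the threshold that forces the two regimes $\{f\ast\beta\geq 1-\epsilon\}$ and $\{f\ast\beta\leq \epsilon\}$ to be separated by a gap larger than the continuity modulus $\epsilon$; there is no substantial obstacle beyond this bookkeeping.
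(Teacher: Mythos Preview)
Your proof is correct and follows essentially the same approach as the paper: use the almost-boolean hypothesis to pin $f\ast\beta$ to within $\epsilon$ of $\{0,1\}$, apply the continuity lemma (Lemma~\ref{lem.cont}) with $\eta=\epsilon/20d$ to get a modulus of $\epsilon$, and conclude $S+B_{\epsilon/20d}\subseteq S$, hence $S+V=S$. Your explicit identification $S=A$ at the outset is a mild streamlining of the paper's version, which instead invokes the almost-boolean property pointwise at $x+y$ and runs a triangle-inequality chain, but the substance is the same.
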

\begin{proof}
Suppose that $x \in S$ so 
\begin{equation*}
|f \ast \beta(x) -1| \leq \epsilon \|f \ast \beta\|_{\ell^\infty(G)} \leq \epsilon \|f\|_{\ell^\infty(G)}\|\beta\| \leq \epsilon.
\end{equation*}
Now, if $y \in B_{\epsilon/20d}$ then
\begin{equation*}
|f \ast \beta(x+y) - f \ast \beta(x)| \leq \epsilon\|f\|_{\ell^\infty(G)}/2 \leq \epsilon
\end{equation*}
by Lemma \ref{lem.cont}. Furthermore, $f \ast \beta$ is $(\epsilon,\infty)$-almost boolean whence there is some $z \in \{0,1\}$ such that
\begin{equation*}
|z-f \ast \beta(x+y)| \leq \epsilon\|f\|_{\ell^\infty(G)} \leq \epsilon.
\end{equation*}
Combining these three expressions using the triangle inequality we get that
\begin{equation*}
|z-1| \leq |z-f \ast \beta(x+y)| + |f \ast \beta(x+y) - f \ast \beta(x)| + |f \ast \beta(x) -1|  \leq 3\epsilon <1.
\end{equation*} 
It follows that $z=1$ so that $x+y \in S$. Thus $S = S + B_{\epsilon/20d}$ and we arrive at the result.
\end{proof}

\section{Quantitative notions of continuity in $A(G)$}\label{sec.tools}

A key tool in the paper \cite{BJGTS2} was a localization of an argument of Green and Konyagin \cite{BJGSVK} to Bourgain systems. Roughly their result gave a quantitative interpretation of the qualitative fact that if $f \in A(G)$ then $f$ is (essentially) continuous. Specifically we require the following proposition which can be read out of the proof of \cite[Proposition 5.1]{BJGTS2}.
\begin{proposition}\label{prop.quantcont}
Suppose that $G$ is an abelian group, $A$ is a finite subset of $G$, $\mathcal{B}$ is a regular Bourgain system of dimension $d$ and $\epsilon \in (0,1]$ is a parameter. Then there is a regular sub-system $\mathcal{B}'$ with
\begin{equation*}
\dim \mathcal{B}' =O(d + \epsilon^{-2}\|1_A\|_{A(G)}^2)
\end{equation*}
and
\begin{equation*}
|\mathcal{B}'| \geq \exp(-O(\epsilon^{-4}\|1_A\|_{A(G)}^4d (1+\log \epsilon^{-1} \|1_A\|_{A(G)}d)))|\mathcal{B}|,
\end{equation*}
such that
\begin{equation*}
\sup_{x \in G}{\|1_A- 1_A \ast \beta'\|_{L^2(x+B_\rho')}} \leq \epsilon
\end{equation*}
for every $\rho \in [\epsilon/160\dim \mathcal{B}',\epsilon/80\dim \mathcal{B}]$ for which $\rho\mathcal{B}'$ is regular, where we recall that $x+B_\rho'$ is endowed with $x+\beta_\rho'$, the measure induced by $f \mapsto \int{f(y)d\beta_\rho'(x+y)}$.
\end{proposition}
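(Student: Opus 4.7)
The plan is to prove the proposition by an energy-increment iteration inside the Bourgain system, adapting the Green--Konyagin argument as in \cite{BJGTS2}. I would build a nested sequence of regular Bourgain sub-systems $\mathcal{B}=\mathcal{B}_0 \supseteq \mathcal{B}_1 \supseteq \dots$ with base sets $B^{(k)} := B_1^{(k)}$ and measures $\beta_k := \mu_{B^{(k)}} \ast \mu_{B^{(k)}}$, and monitor the energy
\begin{equation*}
\mathcal{E}_k := \langle 1_A \ast \beta_k, 1_A \rangle = \sum_{\gamma \in \wh{G}}|\wh{1_A}(\gamma)|^2 \wh{\beta_k}(\gamma)|G|^{-1}.
\end{equation*}
This quantity is non-negative, since $\wh{\beta_k} = |\wh{\mu_{B^{(k)}}}|^2 \geq 0$ (the reason \cite{BJGTS2} uses convolved measures in the first place), and it is bounded above by $\|1_A\|_{\ell^2(G)}^2 = |A|$. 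At stage $k$ we either verify the proposition for $\mathcal{B}_k$ and stop, or extract an increment $\mathcal{E}_{k+1} \geq \mathcal{E}_k + c\epsilon^2 |A|/\|1_A\|_{A(G)}^2$ for an absolute $c > 0$.

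First I would unpack the failure hypothesis. Suppose there exist $x \in G$ and an admissible $\rho$ with $\|1_A - 1_A \ast \beta_k\|_{L^2(x+B_\rho^{(k)})}^2 > \epsilon^2$. Expanding the square, translating the problem to the origin, and applying Plancherel's theorem (using the positivity of $\wh{\beta_\rho^{(k)}}$) shows that a nontrivial portion of the weighted $\ell^2$-mass of $\wh{1_A}$ sits on characters $\gamma$ for which $\wh{\beta_k}(\gamma)$ is bounded away from $1$, i.e.\ characters outside the approximate annihilator of $B^{(k)}$. Combining the bound $\sum_\gamma|\wh{1_A}(\gamma)||G|^{-1} = \|1_A\|_{A(G)}$ with Cauchy--Schwarz, I would then extract a \emph{single} offending character $\gamma_k$ with
\begin{equation*}
|\wh{1_A}(\gamma_k)| \geq c\epsilon |A|/\|1_A\|_{A(G)} \text{\quad and\quad } \wh{\beta_k}(\gamma_k) \leq 1/2.
\end{equation*}

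Second, I would refine by intersecting $\mathcal{B}_k$ with the one-dimensional Bohr constraint $\{y : |1-\gamma_k(y)| \leq \delta\}$ for $\delta = c'\epsilon/\|1_A\|_{A(G)}$ and then applying Proposition \ref{prop.ubreg} to obtain a regular sub-system $\mathcal{B}_{k+1}$. This operation adds $O(1)$ to the dimension, and by Lemma \ref{lem.bourgainsize} combined with a standard lower bound on the size of a single Bohr refinement, shrinks the size by a factor of at most $\exp(-O(\dim \mathcal{B}_k \cdot (1+\log \epsilon^{-1}\|1_A\|_{A(G)}\dim \mathcal{B}_k)))$. The choice of $\delta$ forces $\wh{\beta_{k+1}}(\gamma_k) \geq 3/4$, so the $\gamma_k$-term contributes a fresh $c\epsilon^2|A|/\|1_A\|_{A(G)}^2$ to $\mathcal{E}_{k+1}$, while every other character contributes at least as much as before by the approximate monotonicity $\wh{\beta_{k+1}}(\gamma) \geq \wh{\beta_k}(\gamma)\wh{\beta'}(\gamma)$ for a suitable positive-definite $\beta'$.

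Termination is immediate: since $\mathcal{E}_k \leq |A|$, the iteration ends after at most $K = O(\epsilon^{-2}\|1_A\|_{A(G)}^2)$ steps, yielding $\dim \mathcal{B}' = d + O(K) = O(d + \epsilon^{-2}\|1_A\|_{A(G)}^2)$ and, upon multiplying together the per-step size losses, the compounded bound stated in the proposition. The main obstacle is the execution of the energy-increment step: verifying that failure of $L^2$-continuity really is witnessed by a single offending character (rather than by a diffuse part of the spectrum) and that the $\ell^2$-mass captured by $\gamma_k$ in $\mathcal{E}_{k+1}$ is genuinely new rather than being shifted in from other frequencies. The positivity and approximate monotonicity of $\wh{\beta_k}$ is what makes this bookkeeping work, and is the technical crux of the adaptation.
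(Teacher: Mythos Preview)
The paper does not prove this proposition; it simply records that it ``can be read out of the proof of \cite[Proposition 5.1]{BJGTS2}'' and treats it as a black box. So there is no in-paper proof to compare against, and your task is really to reconstruct the Green--Sanders argument.

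Your architecture --- iterative refinement of the Bourgain system with an energy/increment bounded above --- is the right one, and your bookkeeping for the dimension and size losses per step is consistent with the stated bounds. However, the heart of your increment step does not work as written. You extract a \emph{single} character $\gamma_k$ with $|\wh{1_A}(\gamma_k)| \geq c\epsilon|A|/\|1_A\|_{A(G)}$ and claim that moving $\wh{\beta}(\gamma_k)$ from below $1/2$ to above $3/4$ contributes $c\epsilon^2|A|/\|1_A\|_{A(G)}^2$ to $\mathcal{E}_{k+1}$. But the contribution of a single frequency to your energy is
\[
|\wh{1_A}(\gamma_k)|^2\bigl(\wh{\beta_{k+1}}(\gamma_k)-\wh{\beta_k}(\gamma_k)\bigr)|G|^{-1}
\ \leq\ |\wh{1_A}(\gamma_k)|^2|G|^{-1}
\ \leq\ \frac{c^2\epsilon^2|A|^2}{\|1_A\|_{A(G)}^2|G|},
\]
which carries a factor of the density $|A|/|G|$. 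With that increment the number of iterations is $O(\alpha^{-1}\epsilon^{-2}\|1_A\|_{A(G)}^2)$, not $O(\epsilon^{-2}\|1_A\|_{A(G)}^2)$, and the proposition is meant to be density-free (indeed $G$ may be infinite, where a single character has measure zero and your sum $\sum_\gamma(\cdot)|G|^{-1}$ does not even make sense). The second issue you flag --- approximate monotonicity of $\wh{\beta_k}$ under refinement --- is also real: passing to a sub-system does not give a pointwise inequality $\wh{\beta_{k+1}} \geq \wh{\beta_k}$, so ``every other character contributes at least as much as before'' needs a genuine argument, not an assertion.

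The actual argument in \cite{BJGTS2} does not pivot on a single frequency. From the failure of local $L^2$-continuity one obtains a \emph{physical-side} correlation of $1_A$ (or of $1_A - 1_A\ast\beta_k$) with a translate of the averaging measure, and the increment is in a quantity like $\sup_x 1_A\ast\beta_k(x)$ (equivalently $\sup_x\|1_A\|_{L^2(x+B_1^{(k)})}^2$, using $1_A^2=1_A$), which is bounded by $1$ independently of any density. The passage from local $L^2$-failure to such an increment is where the $A(G)$-norm is genuinely spent, and it does not go through a single large Fourier coefficient. If you want to salvage a spectral version, you would need to capture a whole patch of spectrum at each step and control the total $L^2$-mass on it, not a lone $\gamma_k$.
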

It should be remarked that it is possible to improve the powers of $\|1_A\|_{A(G)}$ and $\epsilon$ in this theorem and doing so results in an improvement to the power of $\log$ in Theorem \ref{thm.densityindependentmaintheorem}.

We also require the celebrated Balog-Szemer{\'e}di and Fre{\u\i}man theorems of \cite{ABES} and \cite{GAF}; see \cite{TCTVHV} for a comprehensive discussion. Our use follows the time honored method laid down by Gowers in \cite{WTG} and the weakness of the powers in Proposition \ref{prop.weakfrei} is the main reason we have not given an explicit constant for the power of $\log$ in Theorem \ref{thm.densityindependentmaintheorem}. There is some hope that this may be remedied if the arguments of \cite{BJGTCTF} are transfered to the general setting.

The following can be read out of the proof of \cite[Proposition 6.3]{BJGTS2}.
\begin{proposition}\label{prop.weakfrei} 
Suppose that $G$ is an abelian group and $A$ is a finite subset of $G$ with $\|1_A \ast 1_A \|_{\ell^2(G)}^2 \geq c |A|^3$. Then there is a regular Bourgain system $\mathcal{B}$ with
\begin{equation*}
\dim \mathcal{B} = c^{-O(1)}\textrm{ and } |\mathcal{B}| = \exp(c^{-O(1)})|A|
\end{equation*}
such that
\begin{equation*}
\|1_A \ast 1_A \ast \beta\|_{\ell^2(G)}^2 =c^{-O(1)}|A|^3.
\end{equation*}
\end{proposition}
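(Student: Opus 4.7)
The plan is to follow the standard Gowers scheme: Balog--Szemer\'edi--Gowers followed by the Green--Ruzsa form of Fre{\u\i}man's theorem for general abelian groups. The hypothesis is an additive energy estimate $\|1_A \ast 1_A\|_{\ell^2(G)}^2 \geq c|A|^3$. Balog--Szemer\'edi--Gowers extracts a subset $A' \subset A$ with $|A'| \geq c^{O(1)}|A|$ and $|A' - A'| \leq c^{-O(1)}|A'|$, and the Green--Ruzsa theorem applied to $A'$ then produces a coset progression $P = \Prog(H,y,L)$ of rank $d = c^{-O(1)}$ and size $|P| \leq \exp(c^{-O(1)})|A|$ with $A' \subset P$.

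Next, convert $P$ into a Bourgain system via the Example of Section \ref{sec.bs}: set $\mathcal{B} := (\Prog(H,y,\rho L))_{\rho \in (0,2]}$, of dimension $c^{-O(1)}$ and size $\exp(c^{-O(1)})|A|$. If $\mathcal{B}$ is not already regular, Proposition \ref{prop.ubreg} replaces it by a regular dilate $\lambda \mathcal{B}$ with $\lambda \in [1/2, 1)$, and by Lemma \ref{lem.bourgainsize} the dimension is preserved while the size is lost by at most a factor of $(\lambda/2)^d$, which is absorbable in the $\exp(c^{-O(1)})$.

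For the $\ell^2$ estimate, use $1_A \geq 1_{A'}$ and $\beta \geq 0$ to reduce to a lower bound on $\|1_{A'} \ast 1_{A'} \ast \beta\|_{\ell^2(G)}^2$. This nonnegative function has $\ell^1$-norm exactly $|A'|^2$ (as $\beta$ is a probability measure) and is supported in $(A'+A') + (B_1 + B_1) \subset 4P$, where $|4P| \leq 4^d |P| \leq \exp(c^{-O(1)})|A|$. Cauchy--Schwarz in the form $\|f\|_{\ell^1(G)}^2 \leq |\supp f| \cdot \|f\|_{\ell^2(G)}^2$ then yields the required lower bound on $\|1_A \ast 1_A \ast \beta\|_{\ell^2(G)}^2$.

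The principal obstacle is controlling the compounding quantitative losses along this chain. Balog--Szemer\'edi--Gowers incurs only polynomial losses in $c^{-1}$, but Fre{\u\i}man's theorem in its general abelian group form (Green--Ruzsa) forces an $\exp(c^{-O(1)})$ blow-up in both the rank and the size of the containing coset progression, and this exponential factor propagates to the final $\ell^2$ bound. As the authors remark, this is the main source of weakness in the power of $\log$ in Theorem \ref{thm.densityindependentmaintheorem}; a polynomial Fre{\u\i}man--Ruzsa type bound in the spirit of \cite{BJGTCTF} would yield an improvement.
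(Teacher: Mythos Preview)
The paper does not prove this here; it defers to \cite[Proposition~6.3]{BJGTS2}, and your plan (Balog--Szemer\'edi--Gowers, then Green--Ruzsa, then a support-plus-Cauchy--Schwarz estimate) is indeed the intended shape. However, there is a genuine quantitative gap in your last step, one you half-acknowledge in your closing paragraph without realising it contradicts the stated conclusion. You arrange $A'\subset P=B_1$ and bound the support of $1_{A'}\ast 1_{A'}\ast\beta$ by $|4P|\leq 4^d|P|$. Since $d=c^{-O(1)}$ and $|P|\leq\exp(c^{-O(1)})|A|$, this gives a support of size $\exp(c^{-O(1)})|A|$, and Cauchy--Schwarz then yields only $\|1_A\ast 1_A\ast\beta\|_{\ell^2(G)}^2\geq\exp(-c^{-O(1)})|A|^3$. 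The proposition, however, asserts the \emph{polynomial} bound $c^{O(1)}|A|^3$, and this matters: in Corollary~\ref{cor.kyt} the ratio $|A'|^{-3}\|1_{A'}\ast 1_{A'}\ast\beta\|_{\ell^2(G)}^2$ becomes the parameter $\eta$ fed to Proposition~\ref{prop.quantcont}, where it sits polynomially inside an exponential; an exponential loss here would cascade into a doubly-exponential one downstream.

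The repair is to place the Bourgain system \emph{inside} an iterated sumset of $A'$ rather than around $A'$. Green--Ruzsa (in its dual form, or via Ruzsa covering) produces a coset progression $P$ of rank $c^{-O(1)}$ and size at least $\exp(-c^{-O(1)})|A'|$ with $P\subset 2A'-2A'$. Building $\mathcal{B}$ from this $P$, one has $\supp\beta\subset 2P\subset 4A'-4A'$ and hence $\supp(1_{A'}\ast 1_{A'}\ast\beta)\subset 6A'-4A'$, which Pl\"unnecke--Ruzsa bounds by $c^{-O(1)}|A'|$. Cauchy--Schwarz now gives $\|1_{A'}\ast 1_{A'}\ast\beta\|_{\ell^2(G)}^2\geq |A'|^4/(c^{-O(1)}|A'|)=c^{O(1)}|A|^3$, as required. (As an aside, your final paragraph misstates Green--Ruzsa: the rank of the progression is polynomial in $c^{-1}$, not exponential; only the size carries the $\exp(c^{-O(1)})$ loss.)
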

The key result of this section is the following which will be the only result that we require again from this or the previous section.
\begin{corollary}\label{cor.kyt}
Suppose that $G$ is an abelian group and $A' \subset A$ are non-empty, finite subsets of $G$ with $\|1_{A'} \ast 1_{A'}\|_{\ell^2(G)}^2 \geq c|A'|^3$ and $\epsilon \in (0,1/2)$ is a parameter. Then there is a subgroup $V \leq G$ with
\begin{equation*}
|V| =\exp(-\epsilon^{-O(1)}c^{-O(1)}\|1_A\|_{A(G)}^{O(1)})|A'|
\end{equation*}
such that $1_A \ast \mu_V$ is $(\epsilon,\infty)$-almost boolean and an $a' \in A'$ such that $1_A \ast \mu_V(a')>1/2$.
\end{corollary}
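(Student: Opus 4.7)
The plan is to combine Propositions~\ref{prop.weakfrei} and~\ref{prop.quantcont} with Lemma~\ref{lem.ivt}, extracting the required subgroup $V$ as the group generated by a small ball of a refined Bourgain system. First, feed the energy hypothesis on $A'$ into Proposition~\ref{prop.weakfrei} to produce a regular Bourgain system $\mathcal{B}$ with $\dim \mathcal{B}=c^{-O(1)}$ and $|\mathcal{B}| = \exp(c^{-O(1)})|A'|$ satisfying $\|1_{A'} \ast 1_{A'} \ast \beta\|_{\ell^2(G)}^2 \geq c^{O(1)}|A'|^3$. Then apply Proposition~\ref{prop.quantcont} to $A$ with this $\mathcal{B}$ and a parameter $\epsilon' = \Omega(\epsilon)$ to obtain a regular sub-system $\mathcal{B}'$ with $\dim \mathcal{B}' = O(c^{-O(1)} + \epsilon^{-O(1)}\|1_A\|_{A(G)}^{O(1)})$, with $|\mathcal{B}'| \geq \exp(-\epsilon^{-O(1)}c^{-O(1)}\|1_A\|_{A(G)}^{O(1)})|A'|$, and on which $f := 1_A \ast \beta'$ is locally $L^2$-close to $1_A$.

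The next step is to upgrade the local $L^2$ control to a pointwise statement: since $f$ is Lipschitz on $B'_\rho$-balls by Lemma~\ref{lem.cont} while $1_A$ is $\{0,1\}$-valued, the local $L^2$-closeness of $f$ and $1_A$ forces $f(x)$ to lie within $O(\epsilon)$ of $\{0,1\}$ at every $x$. Thus $f$ is $(\epsilon,\infty)$-almost boolean, and Lemma~\ref{lem.ivt} produces the subgroup $V := \langle B'_{\epsilon/20d'}\rangle$ (where $d' := \dim \mathcal{B}'$), identifying $S := \{f > 1/2\}$ as a union of cosets of $V$. Lemma~\ref{lem.bourgainsize} applied inside $\mathcal{B}'$ immediately gives the desired lower bound $|V| \geq \exp(-\epsilon^{-O(1)}c^{-O(1)}\|1_A\|_{A(G)}^{O(1)})|A'|$.

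Now transfer from $f$ to $1_A \ast \mu_V$: since $S$ is $V$-invariant we have $1_S \ast \mu_V = 1_S$, and combining this with the local $L^2$-closeness of $1_A$ to $f \approx 1_S$, averaged coset-by-coset, a short computation yields $\|1_A \ast \mu_V - 1_S\|_{\ell^\infty(G)} = O(\epsilon)$, so $1_A \ast \mu_V$ is $(\epsilon,\infty)$-almost boolean. Finally, to locate $a' \in A'$ with $1_A \ast \mu_V(a') > 1/2$, apply Cauchy--Schwarz to the energy bound: $\|1_{A'} \ast 1_{A'} \ast \beta\|_{\ell^\infty(G)} \geq c^{O(1)}|A'|$ produces a point $y$ with $\sum_{a' \in A'}(1_{A'} \ast \beta)(y - a') \geq c^{O(1)}|A'|$, and pigeonholing through the translation $y$ locates $a' \in A' \cap S$, at which $1_A \ast \mu_V(a') \approx 1_S(a') = 1$.

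The main obstacle is the transfer in the third paragraph: $\mu_V$ is supported on the subgroup $\langle B'_{\epsilon/20d'}\rangle$, typically much smaller than the support $B'_1 + B'_1$ of $\beta'$, so relating $1_A \ast \mu_V$ to $f$ requires carefully exploiting the $V$-invariance of $S$ together with the local $L^2$ control applied uniformly over every $V$-coset. It is this step that forces the accumulation of $O(1)$ exponents in the stated bound on $|V|$.
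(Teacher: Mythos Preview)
Your overall architecture matches the paper's: apply Proposition~\ref{prop.weakfrei} to $A'$, then Proposition~\ref{prop.quantcont} to $A$, upgrade local $L^2$-closeness to $(\epsilon,\infty)$-almost booleanness via Lemma~\ref{lem.cont}, invoke Lemma~\ref{lem.ivt} to get $V$, and transfer to $1_A\ast\mu_V$ by Cauchy--Schwarz using $\beta'_\rho\ast\mu_V=\mu_V$. That part is essentially right, and your ``main obstacle'' (the transfer $f\to 1_A\ast\mu_V$) is in fact handled by exactly the short computation you sketch.

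The genuine gap is elsewhere: in locating $a'\in A'$ with $1_A\ast\mu_V(a')>1/2$. Your argument produces a point $y$ with $1_{A'}\ast 1_{A'}\ast\beta(y)\geq c^{O(1)}|A'|$ and hence some $a'\in A'$ with $1_{A'}\ast\beta(y-a')\geq c^{O(1)}$; but this is a statement about the value of $1_{A'}\ast\beta$ at $y-a'$, not about $1_A\ast\beta'$ at $a'$, and $S$ is defined via $\beta'$. There is no mechanism in your sketch to pass from $\beta$ to $\beta'$: being a sub-system only gives $B'_\rho\subset B_\rho$, so $\supp\beta'$ need not lie in any small ball $B_\eta$, and you cannot conclude $\beta\ast\beta'\approx\beta$.

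The paper fixes this by inserting a dilation step that you omit: before applying Proposition~\ref{prop.quantcont}, dilate to $\lambda\mathcal{B}$ with $\lambda\asymp\eta/\dim\mathcal{B}$ (regularised via Proposition~\ref{prop.ubreg}). Then $\mathcal{B}'$ is a sub-system of $\lambda\mathcal{B}$, so $B'_1\subset B_\lambda$, and Lemma~\ref{lem.haar} yields $\|\beta-\beta\ast\beta'\|\leq\eta$. This propagates the energy estimate to $\|1_{A'}\ast 1_{A'}\ast\beta\ast\beta'\|_{\ell^2}^2\gg\eta|A'|^3$, whence Parseval and the positivity $\wh{\beta'}\geq 0$ give $\langle 1_{A'}\ast\beta',1_{A'}\rangle\gg\eta|A'|$. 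Now H\"older produces $a'\in A'$ with $1_{A'}\ast\beta'(a')>0$; since $A'\subset A$ and $1_A\ast\beta'$ is $(O(\eta),\infty)$-almost boolean, this forces $1_A\ast\beta'(a')\geq 1-\epsilon$, so $a'\in S$ and $1_A\ast\mu_V(a')>1/2$. Without the dilation, this chain breaks at the first link.
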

\begin{proof}
Apply Proposition \ref{prop.weakfrei} to the set $A'$ to get a regular Bourgain system $\mathcal{B}$ with\begin{equation*}
\dim \mathcal{B} = c^{-O(1)} \textrm{ and } |\mathcal{B}| =\exp(-c^{-O(1)})|A'| 
\end{equation*}
such that
\begin{equation*}
\|1_{A'} \ast 1_{A'} \ast \beta\|_{\ell^2(G)}^2 =c^{-O(1)}|A'|^3.
\end{equation*}
Set the parameter $\eta:=|A'|^{-3}\|1_{A'} \ast 1_{A'}\ast \beta\|_{\ell^2(G)}^2\epsilon/12$ and apply Proposition \ref{prop.ubreg} to pick a $\lambda$ with
\begin{equation}\label{eqn.tip}
\eta/20\dim \mathcal{B} \geq \lambda \geq \eta/40\dim \mathcal{B}
\end{equation}
such that $\lambda \mathcal{B}$ is regular. 

Now, apply Proposition \ref{prop.quantcont} to $A$ (\emph{not} $A'$) with the regular Bourgain system $\lambda\mathcal{B}$ and parameter $\eta$ to get a regular sub-system $\mathcal{B}'$ with
\begin{equation*}
\dim \mathcal{B}' = \epsilon^{-O(1)} c^{-O(1)}\|1_A\|_{A(G)}^{O(1)}
\end{equation*}
and
\begin{equation*}
|\mathcal{B}'|  \geq\exp(-\epsilon^{-O(1)} c^{-O(1)}\|1_A\|_{A(G)}^{O(1)})|A'| 
\end{equation*}
such that
\begin{equation}\label{eqn.y}
\sup_{x \in G}{\|1_A - 1_A \ast \beta'\|_{L^2(x+B_\rho')}} \leq \eta
\end{equation}
for all $\rho \in [\eta/160\dim \mathcal{B}',\eta/80\dim \mathcal{B}]$ such that $\rho\mathcal{B}'$ is regular.

Given this, apply Lemma \ref{lem.cont} and Proposition \ref{prop.ubreg} to get a regular $\rho$ with $\rho\in [\eta/160\dim \mathcal{B}',\eta/80\dim \mathcal{B}]$ such that
\begin{equation}\label{eqn.r}
\sup_{x \in G}{\|1_A \ast \beta' - 1_A \ast \beta'(x)\|_{L^\infty(x+B_\rho')}} \leq \eta.
\end{equation}
Let $V$ be the group generated by $B_\rho'$. Then, by Lemma \ref{lem.bourgainsize} we have that
\begin{equation*}
|V| \geq |B_\rho'| \geq (\rho/2)^{\dim \mathcal{B}'}|\mathcal{B}'| \geq \exp(-\epsilon^{-O(1)}c^{-O(1)}\|1_A\|_{A(G)}^{O(1)})|A'|,
\end{equation*}
as desired.

Suppose that there is some $x_0 \in G$ such that $|1_A \ast \beta'(x_0)-z| > 2\eta$ for all $z\in \{0,1\}$. Then we see that $|1_A \ast \beta'(x)-z| > \eta$ for all $x \in x_0+B_{\rho}'$ by (\ref{eqn.r}). However, integrating this contradicts (\ref{eqn.y}). It follows that $1_A \ast \beta'$ is $(4\eta,\infty)$-almost boolean.

Writing $S:=\{x \in G: 1_A \ast \beta'(x)>1/2\}$ we see from the definition of $V$ and Lemma \ref{lem.ivt} that $S$ is constant on cosets of $V$. Now
\begin{eqnarray*}
|1_A \ast \mu_V(x) - 1_{S} \ast \mu_V(x)| & \leq & |1_A-1_{S}| \ast
\mu_V(x)\\ & = & |1_A -1_{S}| \ast \beta_\rho' \ast \mu_V(x)\\ &
\leq & \left(|1_A -1_{S}|^2 \ast \beta_\rho'\right)^{\frac{1}{2}}
\ast \mu_V(x)
\end{eqnarray*}
by the Cauchy-Schwarz inequality. Hence
\begin{eqnarray*}
|1_A \ast \mu_V(x) - 1_{S}\ast \mu_V(x)| & \leq & \left(|1_A -1_A\ast
\beta'|^2 \ast \beta_\rho'\right)^{\frac{1}{2}} \ast \mu_V(x)\\
& & + \left(|1_A\ast \beta' -1_{S}|^2 \ast
\beta_\rho'\right)^{\frac{1}{2}} \ast \mu_V(x)\\ & \leq &5\eta.
\end{eqnarray*}
Since $S$ is constant on cosets of $V$ we have that $1_S=1_S \ast \mu_V$ and hence conclude that $1_A \ast \mu_V$ is $(10\eta,\infty)$-almost boolean as required (in view of the definition of $\eta$).

On the other hand by Lemma \ref{lem.haar}, the upper bound on $\lambda$ and the fact that $\mathcal{B}'$ is a sub-system of $\lambda \mathcal{B}$ we get that
\begin{equation*}
\|\beta - \beta \ast \beta'\| \leq \eta.
\end{equation*}
It follows that
\begin{equation*}
\|1_{A'} \ast 1_{A'} \ast \beta \ast \beta'\|_{\ell^2(G)}^2 \geq \|1_{A'} \ast 1_{A'} \ast \beta\|_{\ell^2(G)}^2 -\eta|A'|^3 \geq 11\eta |A'|^3.
\end{equation*}
But, by Parseval's theorem
\begin{eqnarray*}
\|1_{A'} \ast 1_{A'} \ast \beta \ast \beta'\|_{\ell^2(G)}^2 & = & \int{|\wh{1_{A'}}(\gamma)\wh{\beta}(\gamma)\wh{\beta'}(\gamma)|^2d\gamma}\\ & \leq & |A'|^2\int{|\wh{1_{A'}}(\gamma)|^2\wh{\beta'}(\gamma)d\gamma}
\end{eqnarray*}
since $\wh{\beta'}(\gamma)\geq 0$. Combining these with Plancherel's theorem tells us that
\begin{equation*}
\langle 1_{A'} \ast \beta',1_{A'}\rangle \geq11\eta|A'|.
\end{equation*}
It follows from H{\"o}lder's inequality that there is some $a' \in A'$ for which $1_{A'} \ast \beta'(a') \geq11\eta$.

It remains to note that $A' \subset A$ whence $1_A \ast \beta'(a') \geq 1_{A'} \ast \beta'(a')$. Since $1_A \ast \beta'$ is $(10\eta,\infty)$-almost boolean we see that $1_A \ast \beta'(a') \geq 1-\epsilon$; it follows that $a' \in S$ and so $1_A \ast \mu_V(a') \geq 1-2\epsilon > 1/2$. The proof is complete.
\end{proof}

\section{Proof of Theorem \ref{thm.densityindependentmaintheorem}}\label{sec.maintheorem2}

Our proof of Theorem \ref{thm.densityindependentmaintheorem} is iterative in nature with the next lemma being driver. We briefly sketch the statement in words to aid understanding.

We build up a collection of subgroups. At each stage if $M_G(A)$ is small and $A$ is not essentially the union of the subsgroups we have already found then we may find another subgroup which is `very orthogonal' to those already found and which is almost entirely contained in $A$.

The orthogonality coupled with the algebra norm bound (resulting from the fact that $M_G(A)$ is small) implies that the iteration cannot proceed for too many steps.
\begin{lemma}\label{lem.mainitlem}
Suppose that $G$ is an abelian group, $A$ is a non-empty, finite symmetric subset of $G$ and $2^{-4}K'\geq K \geq 1$ are parameters. Suppose, further, that $\mathcal{H}$ is a finite collection of subgroups of $G$ with
\begin{equation*}
|H| \geq K', |H \setminus A| \leq K \textrm{ and } \sup_{x \not \in H}{1_A \ast \mu_H(x)} \leq 1/16\|1_A\|_{A(G)} \textrm{ for all } H \in \mathcal{H}
\end{equation*}
such that
\begin{equation*}
|H \cap H'| \leq K \textrm{ for all distinct pairs }H,H' \in \mathcal{H}.
\end{equation*}
Then, recalling that $\bigcup{\mathcal{H}}=\bigcup_{H \in \mathcal{H}}{H}$, at least one of the following is true:
\begin{enumerate}
\item \label{enum.it.c1} (Good approximation) 
\begin{equation*}
|A \setminus \bigcup{\mathcal{H}}| \leq 2^4|\mathcal{H}|^2K;
\end{equation*}
\item \label{enum.it.c2} (Large negative Fourier coefficient)
\begin{equation*}
M_G(A) = K^{\Omega(1)};
\end{equation*}
\item \label{enum.it.c3} (Unbalanced parameters)
\begin{equation*}
K' \geq \exp(-(\|1_A\|_{A(G)} + |\mathcal{H}|)^{C_{\mathcal{S}}})|A\setminus \bigcup{\mathcal{H}}|
\end{equation*}
for some absolute $C_\mathcal{S}>0$;
\item \label{enum.it.c4} (Correlating subgroup) there is a subgroup $H_0 \leq G$ with
\begin{equation*}
|H_0| \geq K', |H_0 \setminus A| \leq K \textrm{ and } \sup_{x \not \in H_0}{1_A \ast \mu_{H_0}(x)} \leq 1/16\|1_A\|_{A(G)}
\end{equation*}
such that
\begin{equation*}
|H_0 \cap H| \leq K \textrm{ for all } H \in \mathcal{H}.
\end{equation*}
\end{enumerate}
\end{lemma}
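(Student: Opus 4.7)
The plan is to produce the new subgroup $H_0$ required for case (iv) by applying Corollary \ref{cor.kyt} to the pair $A^* \subset A$, where $A^* := A \setminus \bigcup \mathcal{H}$, and then identifying $H_0$ as the super-level set of the resulting almost-boolean function. First I would observe that if $|A^*| \leq 2^4|\mathcal{H}|^2K$ we are immediately in case (i), so assume otherwise. To invoke Corollary \ref{cor.kyt} on $A^*$ I need a lower bound $\|1_{A^*} \ast 1_{A^*}\|_{\ell^2(G)}^2 \geq c|A^*|^3$; using the algebra property of $A(G)$ under pointwise multiplication,
\[
\|1_{A^*}\|_{A(G)} = \|1_A(1-1_{\bigcup \mathcal{H}})\|_{A(G)} \leq (|\mathcal{H}|+1)\|1_A\|_{A(G)},
\]
and then the standard H\"older--Plancherel inequality $\|\wh{1_{A^*}}\|_{L^4}^4 \geq |A^*|^6/\|1_{A^*}\|_{A(G)}^2$ gives a quantitative $c$. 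Unless $c$ is so small that the output of the corollary already forces us into case (iii), I may treat $c$ as depending only polynomially on $\|1_A\|_{A(G)}$ and $|\mathcal{H}|$.

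Corollary \ref{cor.kyt}, applied with a small parameter $\epsilon$ to be chosen, then yields a subgroup $V \leq G$ with $|V| \geq \exp(-P(\|1_A\|_{A(G)},|\mathcal{H}|,\epsilon^{-1}))|A^*|$ for some polynomial $P$, such that $f := 1_A \ast \mu_V$ is $(\epsilon,\infty)$-almost boolean and $f(a')>1/2$ for some $a' \in A^*$. Apply Lemma \ref{lem.trivialdensity2} to $f$ and $V$; this gives a dichotomy. In the Fourier branch, $\wh{f}(\gamma) \leq -|V|(1/8 - 5\epsilon/8)$, and since $\wh{f} = \wh{1_A}\wh{\mu_V}$ with $\wh{\mu_V}=1_{V^\perp}$ boolean-valued, I deduce $\wh{1_A}(\gamma) \leq -|V|/16$ for small enough $\epsilon$, so $M_G(A) \geq |V|/16$. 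Choosing the constant $C_\mathcal{S}$ in case (iii) large enough that $(\|1_A\|_{A(G)}+|\mathcal{H}|)^{C_\mathcal{S}}$ dominates $P$, either we are already in case (iii) or $|V| \geq K' \geq 16K$, placing us in case (ii).

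In the other branch, $H_0 := \{x : f(x)>1/2\}$ is a subgroup of $G$, and I take this as the candidate. Three of the four conditions of case (iv) follow by routine computation: $|H_0| \geq |V| \geq K'$; the almost-boolean property forces $f \geq 1-\epsilon$ on $H_0$, so summing $|(x+V)\setminus A| \leq \epsilon|V|$ over cosets of $V$ in $H_0$ yields $|H_0 \setminus A| \leq \epsilon|H_0|$, which I arrange to be $\leq K$ by choosing $\epsilon$ small (exploiting the crude bound $|H_0| \leq 2|A|$ coming from $\|f\|_{\ell^1(G)} = |A|$ and $f>1/2$ on $H_0$); and for $x \notin H_0$ the identity $1_A \ast \mu_{H_0} = f \ast \mu_{H_0}$ (using $V \leq H_0$) combined with $f \leq \epsilon$ off $H_0$ yields $1_A \ast \mu_{H_0}(x) \leq \epsilon$, which is $\leq 1/16\|1_A\|_{A(G)}$ once $\epsilon$ is chosen accordingly.

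The hard part will be verifying the orthogonality condition $|H_0 \cap H| \leq K$ for every $H \in \mathcal{H}$. I would argue by contradiction: supposing $|H_0 \cap H| > K$, the translated coset $a'+(H_0 \cap H) \subset H_0$ is entirely disjoint from $H$ (because $a' \notin H$), yet $f \geq 1-\epsilon$ there, so $\sum_{x \in a'+H_0 \cap H}|A \cap (x+V)| \geq (1-\epsilon)|V||H_0 \cap H|$. Re-expanding this sum over pairs in $(H_0 \cap H) \times A$ relates it to $|A \cap W|$ for the subgroup $W := (H_0 \cap H)+V$, while the standing hypothesis $1_A \ast \mu_H(a') \leq 1/16\|1_A\|_{A(G)}$ simultaneously pins $|A \cap (a'+H)|$ to be small; a Fourier comparison on the annihilator of $W$ should convert this tension into a large negative value of $\wh{1_A}$, placing us in case (ii). Making that final Fourier-orthogonality step quantitatively tight is the main technical obstacle.
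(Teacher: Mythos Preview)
Your overall architecture matches the paper's: apply Corollary~\ref{cor.kyt} to $A':=A\setminus\bigcup\mathcal{H}\subset A$, take $H_0$ to be the super-level set of $1_A\ast\mu_V$, invoke Lemma~\ref{lem.trivialdensity2} for the subgroup dichotomy, and then verify the four conditions. But two of your steps do not go through as written.

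\textbf{The algebra-norm bound on $1_{A^*}$.} Your inequality $\|1_{A^*}\|_{A(G)}\leq(|\mathcal{H}|+1)\|1_A\|_{A(G)}$ is not justified: $1-1_{\bigcup\mathcal{H}}\notin A(G)$ when $G$ is infinite, and writing $1_{A^*}=1_A\prod_{H}(1-1_H)$ and iterating the algebra inequality only yields $2^{|\mathcal{H}|}\|1_A\|_{A(G)}$ (each factor contributes $1+\|1_H\|_{A(G)}=2$). There is no reason for $\|1_{\bigcup\mathcal{H}}\|_{A(G)}\leq|\mathcal{H}|$; inclusion--exclusion over subgroups gives $2^{|\mathcal{H}|}-1$, and the smallness of $|H\cap H'|$ does not help since $\|1_{H\cap H'}\|_{A(G)}=1$ regardless. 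The paper sidesteps this by working not with $1_{A'}$ but with $f:=1_A-\sum_{H\in\mathcal{H}}1_H$, which trivially has $\|f\|_{A(G)}\leq\|1_A\|_{A(G)}+|\mathcal{H}|$, and then compares $\|\wh f\|_{L^4}$ with $\|\wh{1_{A'}}\|_{L^4}$ through the small error terms $g-1_{\bigcup\mathcal{H}}$ and $1_E$. That is what produces the polynomial dependence on $|\mathcal{H}|$ in case~(\ref{enum.it.c3}).

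\textbf{The bound $|H_0\setminus A|\leq K$.} This is the fatal gap. You propose to take $\epsilon\lesssim K/|A|$ so that $|H_0\setminus A|\leq\epsilon|H_0|\leq K$. But $\epsilon^{-1}$ appears polynomially in the exponent of Corollary~\ref{cor.kyt}, so you would only get $|V|\geq\exp\bigl(-(|A|/K)^{O(1)}\bigr)|A'|$. In the intended regime $K$ is polylogarithmic in $|A|$, so this lower bound is worthless and cannot feed into case~(\ref{enum.it.c3}) as stated. The paper instead fixes $\epsilon=1/(16\|1_A\|_{A(G)})$ once and for all, and obtains $|H_0\setminus A|\leq K$ by a completely different route: it applies Theorem~\ref{thm.maintheorem} to the set $A\cap H_0$ inside the finite group $H_0$ (where it has density $>3/4$), getting either a subgroup $H'\leq H_0$ with $\|1_{A\cap H_0}-1_{H'}\|_{\ell^1}\leq K$ or $M_G(A)\geq K^{\Omega(1)}$; a short argument then forces $H'=H_0$. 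This appeal to the dense-case theorem is the key idea you are missing.

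For the orthogonality $|H_0\cap H|\leq K$, your sketch is vaguer than necessary. The paper's argument is clean and does not involve $a'$ beyond noting $H_0\not\leq H$: with $H_1:=H_0\cap H$ one expands $\langle 1_A\ast\mu_{H_1},(\mu_{H_1}-\mu_H)\ast(\mu_{H_1}-\mu_{H_0})\rangle$, bounds it above by $-1/8$ using $|H\setminus A|,|H_0\setminus A|\leq K$ and the hypothesis on $\sup_{x\notin H}1_A\ast\mu_H$, and bounds it below by $-M_G(A)/|H_1|$ via Plancherel, giving $M_G(A)\geq|H_1|/8$ directly.
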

\begin{proof}
Begin by considering the function $g:=\sum_{H \in \mathcal{H}}{1_{H}}$. By the triangle inequality and the fact that $1_H^2=1_H$, we have
\begin{equation*}
\|g^2-g\|_{\ell^1(G)} = \sum_{H \neq H', H,H' \in \mathcal{H}}{|H\cap H'|} \leq |\mathcal{H}|^2K,
\end{equation*}
i.e. $g$ behaves quite a lot like a boolean function: the indicator function of the set $\bigcup{\mathcal{H}}$. In particular, $g$ is non-negative and if $x \in \supp g= \bigcup{\mathcal{H}}$, then $g(x) \geq 1$, so we have that
\begin{eqnarray*}
\|g-1_{\bigcup{\mathcal{H}}}\|_{\ell^2(G)}^2 = \sum_{x \in \bigcup{\mathcal{H}}}{(g(x)-1)^2} & \leq & \sum_{x \in G}{g(x)(g(x)-1)}\\ & = & \|g^2-g\|_{\ell^1(G)}  \leq |\mathcal{H}|^2K.
\end{eqnarray*}
Furthermore, since $g-1_{\bigcup{\mathcal{H}}}$ takes values in $\N_0$ we have that
\begin{equation}\label{eqn.gest}
\|g-1_{\bigcup{\mathcal{H}}}\|_{\ell^1(G)}  \leq \|g-1_{\bigcup{\mathcal{H}}}\|_{\ell^2(G)}^2 \leq |\mathcal{H}|^2K.
\end{equation}

Now, let $f$ be the function $1_A - g$. It follows from the triangle inequality and the fact that the algebra norm of a subspace is $1$ that
\begin{equation*}
\|f\|_{A(G)} \leq \|1_A\|_{A(G)} + \sum_{H \in \mathcal{H}}{\|1_H\|_{A(G)}} \leq \|1_A\|_{A(G)} + |\mathcal{H}|.
\end{equation*}
By Parseval's theorem we have that
\begin{equation*}
\|f \ast f\|_{\ell^2(G)}^2 = \|\wh{f}\|_{L^4(\wh{G})}^4.
\end{equation*}
However, by $\log$-convexity of the $L^p(\wh{G})$-norms and Parseval's theorem
\begin{equation*}
\|\wh{f}\|_{L^4(\wh{G})}^4 \geq \|\wh{f}\|_{L^1(\wh{G})}^{-2}\|\wh{f}\|_{L^2(\wh{G})}^6=\|f\|_{A(G)}^{-2}\|f\|_{\ell^2(G)}^6,
\end{equation*}
whence
\begin{equation}\label{eqn.nrg}
\|f \ast f\|_{\ell^2(G)}^2 \geq (\|1_A\|_{A(G)}+|\mathcal{H}|)^{-2}\|f\|_{\ell^2(G)}^6.
\end{equation}

Write $A'$ for the set $A \setminus \bigcup{\mathcal{H}}$ and $E$ for the set $\bigcup{\mathcal{H}} \setminus A$, so that
\begin{equation*}
f=1_{A'} +(1_{\bigcup{\mathcal{H}}}-g) -1_E.
\end{equation*}
Since $|H \setminus A| \leq K$ for all $H \in \mathcal{H}$ we have $|E|\leq K|\mathcal{H}|$, so by the triangle inequality for the $\ell^2(G)$-norm and (\ref{eqn.gest}) we have
\begin{eqnarray*}
\|f\|_{\ell^2(G)} & \geq &\|1_{A'}\|_{\ell^2(G)}-\|1_{\bigcup{\mathcal{H}}}-g\|_{\ell^2(G)} -\|1_E\|_{\ell^2(G)}\\ &\geq & \sqrt{|A'|} - 2|\mathcal{H}|\sqrt{K}.
\end{eqnarray*}
Thus, either $|A'| \leq 2^4|\mathcal{H}|^2K$ and we are in case (\ref{enum.it.c1}), or else
\begin{equation}\label{eqn.t}
\|f\|_{\ell^2(G)}^2 \geq |A'|/4.
\end{equation}
By the triangle inequality for the $L^4(\wh{G})$-norm we have
\begin{equation}\label{eqn.rf}
\|\wh{f}\|_{L^4(\wh{G})} \leq \|\wh{1_{A'}}\|_{L^4(\wh{G})} + \|(1_{\bigcup{\mathcal{H}}}-g)^\wedge\|_{L^4(\wh{G})}+\|\wh{1_E}\|_{L^4(\wh{G})}.
\end{equation}
However by H{\"o}lder's inequality, the Hausdorff-Young inequality and Parseval's theorem we have that
\begin{eqnarray*}
\|(1_{\bigcup{\mathcal{H}}}-g)^\wedge\|_{L^4(\wh{G})}^4&\leq& \|(1_{\bigcup{\mathcal{H}}}-g)^\wedge\|_{L^2(\wh{G})}^2\|(1_{\bigcup{\mathcal{H}}}-g)^\wedge\|_{L^\infty(\wh{G})}^2\\ & \leq &\|1_{\bigcup{\mathcal{H}}}-g\|_{\ell^2(G)}^2\|1_{\bigcup{\mathcal{H}}}-g\|_{\ell^1(G)}^2 \leq |\mathcal{H}|^6K^3.
\end{eqnarray*}
Where the last inequality is from (\ref{eqn.gest}). Similarly
\begin{equation*}
\|\wh{1_E}\|_{L^4(\wh{G})}^4 \leq \|\wh{1_E}\|_{L^2(\wh{G})}^2\|\wh{1_E}\|_{L^\infty(\wh{G})}^2\leq \|1_E\|_{\ell^2(G)}^2\|1_E\|_{\ell^1(G)}^2 \leq K^3|\mathcal{H}|^3.
\end{equation*}
Inserting these estimates into (\ref{eqn.rf}) we get that
\begin{equation*}
\|\wh{f}\|_{L^4(\wh{G})} \leq \|\wh{1_{A'}}\|_{L^4(\wh{G})} + 2(|\mathcal{H}|^6K^3)^{1/4}.
\end{equation*}
Now, by Parseval's theorem we have
\begin{equation*}
\|\wh{1_{A'}}\|_{L^4(\wh{G})}^4 = \|1_{A'} \ast 1_{A'}\|_{\ell^2(G)}^2 \geq |A'|^2.
\end{equation*}
Thus either $|A'|^8 \leq 2^4|\mathcal{H}|^{6}K^3$ and we are in case (\ref{enum.it.c1}) or else
\begin{equation*}
\|1_{A'} \ast 1_{A'}\|_{\ell^2(G)}^2 = \|\wh{1_{A'}}\|_{L^4(\wh{G})}^4 = \Omega(\|\wh{f}\|_{L^4(\wh{G})}^4) = \Omega((\|1_A\|_{A(G)} + |\mathcal{H}|)^{-2}|A'|^3)
\end{equation*}
by Parseval's theorem, (\ref{eqn.nrg}) and (\ref{eqn.t}).

Now, apply Corollary \ref{cor.kyt} to $A' \subset A$ with parameter $\epsilon=1/16\|1_A\|_{A(G)}$ to get a subgroup $V \leq G$ with
\begin{equation*}
|V|=\exp(-(\|1_A\|_{A(G)} + |\mathcal{H}|)^{O(1)})|A'| 
\end{equation*}
such that $1_A \ast \mu_V$ is $(\epsilon,\infty)$-almost boolean and an $a' \in A'$ such that $1_A \ast \mu_V(a') > 1/2$. We let
\begin{equation*}
H_0:=\{x \in G: 1_A \ast \mu_V(x)>1/2\},
\end{equation*}
and we shall now show that $H_0$ has the necessary properties to be the group in case (\ref{enum.it.c4}) of the lemma.

\begin{claim}
$H_0$ is a subgroup.
\end{claim}
\begin{proof}
Apply Lemma \ref{lem.trivialdensity2} to $1_A \ast \mu_V$. This tells us that either $H_0$ is a subgroup or there is a character $\gamma$ such that
\begin{equation*}
\wh{1_A}(\gamma)1_{V^\perp}(\gamma) = (1_A \ast \mu_V)^\wedge(\gamma) \leq -|V|/16 
\end{equation*}
since $\epsilon \leq 1/10$. In view of the lower bound on $|V|$ it follows that either we are in case (\ref{enum.it.c3}) or (we aren't and are therefore) in case (\ref{enum.it.c2}). It follows that we may assume that $H_0$ is a subgroup.
\end{proof}

\begin{claim}
$K' \leq |H_0| <\infty$
\end{claim}
\begin{proof}
Since $V \subset H_0$ we see that either we are in (\ref{enum.it.c3}) or else $|H_0| \geq K'$ as required. The upper bound follows since $A$ is finite and $1_A \ast \mu_V(x)>1/2$ for all $x \in H_0$.
\end{proof}

\begin{claim}
$|H_0 \setminus A| \leq K$
\end{claim}
\begin{proof}
Apply Theorem \ref{thm.maintheorem} to the set $A \cap H_0$ (possible since $H_0$ is finite), so that either there is a subgroup $H' \leq H_0$ such that
\begin{equation}\label{eqn.as}
\|1_{A\cap H_0} -1_{H'}\|_{\ell^1(H_0)} \leq K
\end{equation}
or else we have a character $\gamma$ (on $H_0$ which induces a character on $G$) such that
\begin{equation*}
\wh{1_A}\ast \mu_{H_0^\perp}(\gamma) =(1_{A \cap H_0})^\wedge(\gamma) \leq -K^{\Omega(1)}.
\end{equation*}
It follows that we are in (\ref{enum.it.c2}) by averaging since $\mu_{H_0^\perp}$ is a probability measure.

Since $1_A \ast \mu_V$ is $(\epsilon,\infty)$-almost boolean (and $H_0$ is a subgroup so $0_G \in H_0$) we have that
\begin{equation*}
1_A \ast \mu_V(x)>3/4 \textrm{ for all } x \in H_0.
\end{equation*}
Furthermore, $V$ is a subgroup of $H_0$, so we have that $1_A \ast \mu_{H_0}(0_G)>3/4$.

Now,  $H' \leq H_0$ and if it were a proper subgroup then we would have that $|H'| \leq |H_0|/2$ whence
\begin{equation*}
|H_0|/4 < \|1_{A\cap H_0} - 1_{H'}\|_{\ell^1(H_0)}  \leq K.
\end{equation*}
by (\ref{eqn.as}). Since $|H_0| \geq |V|$ we conclude that we are in case (\ref{enum.it.c3}). Thus we may suppose not so that $H'=H_0$ and it follows that $|H_0 \setminus A| \leq K$ as required.
\end{proof}

\begin{claim}
$|H \cap H_0| \leq K$ for all $H \in \mathcal{H}$
\end{claim}
\begin{proof}
Suppose that $H \in \mathcal{H}$. Since $H_0 \cap A' \neq \emptyset$ and $A' \cap H = \emptyset$ we see that $H_0 \not \leq H$, whence $|H_0+H| \geq 2|H|$.

Let $H_1:=H \cap H_0$ and consider the inner product
\begin{equation*}
\langle 1_A\ast \mu_{H_1},(\mu_{H_1} - \mu_{H}) \ast (\mu_{H_1} - \mu_{H_0})\rangle.
\end{equation*}
When expanded out it is equal to
\begin{equation*}
1_A \ast \mu_{H_1}(0_G) - 1_A \ast \mu_H(0_G) - 1_A \ast \mu_{H_0}(0_G) + 1_A \ast \mu_{H_0+H}(0_G).
\end{equation*}
Now the first term is at most $1$, the second and third at least $1-K/|H|$ and, finally, the fourth is at most
\begin{equation*}
\frac{1}{|H_0+H|}.\left(|A \cap H| + \frac{|H_0+H|-|H|}{|H|}\sup_{x \not \in H}{1_A \ast \mu_H(x)}\right) \leq (1+1/2)/2.
\end{equation*}
Combining all this tells us that
\begin{equation*}
\langle 1_A\ast \mu_{H_1},(\mu_{H_1} - \mu_{H}) \ast (\mu_{H_1} - \mu_{H_0})\rangle \leq -1/8.
\end{equation*}
By Plancherel's theorem we conclude that
\begin{equation*}
-1/8 \geq \int_{\gamma \in H_1^\perp}{\wh{1_A}(\gamma)(1_{H_1^\perp} - 1_{H^\perp})(\gamma)(1_{H_1^\perp} - 1_{H_0^\perp})(\gamma)d\gamma} \geq \inf_{\gamma \in \wh{G}}{\wh{1_A}(\gamma)}\mu(H_1^\perp).
\end{equation*}
Rearranging it follows that $M_G(A) = \Omega(|H_1|)$. Thus we are either in case (\ref{enum.it.c2}) or else $|H_1| \leq K$ as desired.
\end{proof}

\begin{claim}
$\sup_{x \not \in H_0}{1_A \ast \mu_{H_0}(x)} \leq 1/16\|1_A\|_{A(G)}$
\end{claim}
\begin{proof}
If $x \not \in H_0$ then $1_A \ast \mu_V(x) \leq \epsilon$ since $1_A \ast \mu_V(x)$ is $(\epsilon,\infty)$-almost boolean whence the desired conclusion follows on noting that $V$ is a subgroup of $H_0$.
\end{proof}

It follows that $H_0$ has all the claimed properties and we are in case (\ref{enum.it.c4}); the proof is complete.
\end{proof}
We are now in a position to iterate the above lemma to prove the main theorem.
\begin{proof}[Proof of Theorem \ref{thm.densityindependentmaintheorem}]
Define the auxiliary parameter $K_0$ to be
\begin{equation*}
\min\{|A \triangle \bigcup{\mathcal{H}}|: \mathcal{H} \textrm{ is a collection of at most } 32M_G(A)\textrm{ subgroups.}\}.
\end{equation*}
We begin as in the proof of Theorem \ref{thm.maintheorem} and may suppose that $A$ is not a subgroup of $G$, or else we are trivially done with $H=A$, and so it follows from Proposition \ref{prop.vsimple} that $M_G(A) \geq 1/2$. Thus
\begin{eqnarray*}
M_G(A) &\geq  &(2M_G(A)+1_A(0_G))/4\\ \nonumber & = & (2M_G(A) + \int{\wh{1_A}(\gamma)d\gamma})/4 \geq \|1_A\|_{A(G)}/4,
\end{eqnarray*}
where the passage from the first to the second line is via the Fourier inversion formula.

We pick $M$ with $M = \log^{\Omega(1)} K_0$ such that
\begin{equation*}
\exp(-(9M)^{C_\mathcal{S}})>|K_0|^{-1/4} \textrm{ and } M \leq |K_0|^{1/8}/32,
\end{equation*}
where $C_\mathcal{S}>0$ is the absolute constant in Lemma \ref{lem.mainitlem}, and let $K':=K_0^{3/4}$ and $K := K_0^{3/8}/2^{11}M^2$.

Now, split into two cases; if $\|1_A\|_{A(G)} \geq M$ then we are done by our previous averaging argument, whence we shall assume that $\|1_A\|_{A(G)} \leq M$.

We construct a sequence of finite collections of subspaces $(\mathcal{H}_i)_i$ with
\begin{equation*}
|H| \geq K', |H \setminus A| \leq K \textrm{ and } \sup_{x \not \in H}{1_A \ast \mu_H(x)} \leq 1/2 \textrm{ for all } H \in \mathcal{H}_i
\end{equation*}
such that
\begin{equation*}
|H \cap H'| \leq K \textrm{ for all distinct pairs }H,H' \in \mathcal{H}_i.
\end{equation*}
We initialize with $\mathcal{H}_0=\emptyset$ which trivially satisfies the above and apply Lemma \ref{lem.mainitlem} repeatedly. If $i \leq 8\|1_A\|_{A(G)}$ then we see that 
\begin{equation*}
K_0 > 2^4|\mathcal{H}|^2K \textrm{ and } K' < \exp(-(\|1_A\|_{A(G)}+|\mathcal{H}|)^{C_\mathcal{S}})K_0,
\end{equation*}
whence each application of the lemma either tells us that $M_G(A) = K_0^{\Omega(1)}$ (and we are done) or that there is a new subgroup $H_0$ which may be added to $\mathcal{H}_i$ to get $\mathcal{H}_{i+1}$ thus blessed with all the desired properties.

However, it turns out that the iteration must terminate before this stage as we shall now see. Suppose that $H \in \mathcal{H}_i$. Then $|H \setminus A| \leq K$, whence
\begin{equation*}
\int_{\gamma \in H^\perp}{\wh{1_A}(\gamma)d\gamma} = \langle 1_A, \mu_H \rangle \geq 1-K/|H|
\end{equation*}
by Plancherel's theorem. Now let $H' \in \mathcal{H}_i$ have $H' \neq H$. Then
\begin{eqnarray*}
 \int{\wh{1_A}(\gamma)1_{(H+H')^\perp}(\gamma)d\gamma} & = & \langle1_A , \mu_{H+H'}\rangle\\ & = & \E_{W \in H+H'/H}{1_A \ast \mu_H(V)},
\end{eqnarray*}
which is well defined since $W$ is a coset of $H$ and $1_A \ast \mu_H$ is constant on cosets of $H$. It follows that
\begin{equation*}
| \int{\wh{1_A}(\gamma)1_{(H+H')^\perp}(\gamma)d\gamma}| \leq \frac{|H|}{|H+H'|}\left(1+\frac{1}{16\|1_A\|_{A(G)}}.\frac{|H+H'| - |H|}{|H|}\right).
\end{equation*}
On the other hand $|H+H'| = |H||H'|/|H \cap H'| \geq (K')^2/K$ whence
\begin{equation*}
| \int{\wh{1_A}(\gamma)1_{(H+H')^\perp}(\gamma)d\gamma}| \leq 1/16\|1_A\|_{A(G)}.
\end{equation*}
Now
\begin{equation*}
\int_{\gamma \in H^\perp}{\wh{1_A}(\gamma)d\gamma} -\int{\wh{1_A}(\gamma)1_{H^\perp \setminus (H')^\perp}(\gamma)d\gamma} = \int{\wh{1_A}(\gamma)1_{(H+H')^\perp}(\gamma)d\gamma},
\end{equation*}
whence
\begin{equation*}
\int_{\gamma \in H^\perp\setminus (H')^\perp}{|\wh{1_A}(\gamma)|d\gamma} \geq 1 - K/K' - 1/16\|1_A\|_{A(G)},
\end{equation*}
and writing $S_H=\bigcup\{H'^\perp: H' \in \mathcal{H}, H' \neq H\}$ we get from the triangle inequality that
\begin{equation*}
\int_{\gamma \in H^\perp\setminus S_H}{|\wh{1_A}(\gamma)|d\gamma} \geq 1 - 8\|1_A\|_{A(G)}K/K' - 1/2 \geq 1/4.
\end{equation*}
On the other hand the sets $(H\setminus S_H)_{H \in \mathcal{H}}$ are disjoint by design and so
\begin{equation*}
\|1_A\|_{A(G)} \geq \sum_{H \in \mathcal{H}}{\int_{\gamma \in H^\perp\setminus S_H}{|\wh{1_A}(\gamma)|d\gamma}} \geq |\mathcal{H}|/4.
\end{equation*}
It follows that in fact $|\mathcal{H}| \leq 4\|1_A\|_{A(G)}$ and the iteration terminates. The theorem is proved.
\end{proof}

\section{Concluding remarks}\label{sec.conrem}

As noted in the introduction lower bounds on the algebra norm of a set can be converted into lower bounds for $M_G$ by averaging. In view of this it is natural to take the quantitative idempotent theorem of \cite{BJGTS2} and try to derive a version of Theorem \ref{thm.densityindependentmaintheorem}.
\begin{theorem}[Quantitative idempotent theorem, {\cite[Theorem 1.3]{BJGTS2}}]\label{thm.bjg}
Suppose that $G$ is an abelian group and $A \subset G$ is a finite set. Then we may write
\begin{equation*}
1_A=\sum_{i=1}^L{\pm{1_{x_j+H_j}}}
\end{equation*}
where the $H_j \leq G$ are subgroups and $L = \exp(\exp(O(\|1_A\|_{A(G)}^4)))$. Moreover, the number of distinct subgroups $H_j$ is at most $M+O(1)$.
\end{theorem}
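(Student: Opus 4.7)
The plan is to prove Theorem \ref{thm.bjg} by an iterative peeling strategy analogous to (but more refined than) the one used to prove Theorem \ref{thm.densityindependentmaintheorem}. Write $K:=\|1_A\|_{A(G)}$. The goal is to produce, at each step, a subgroup $H\leq G$ and translate $x\in G$ such that $(x+H)\subset A$ or $(x+H)\cap A=\emptyset$, so that replacing $1_A$ by $1_A\mp 1_{x+H}$ strictly decreases the $A(G)$ norm. Iterating then expresses $1_A$ as a signed sum of coset indicators; the bound on $L$ and on the number of distinct subgroups come from counting iterations and appealing to an orthogonality argument.

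To carry out one step I would use the machinery from Sections \ref{sec.bs} and \ref{sec.tools} almost verbatim. First, by log-convexity of $L^p(\wh{G})$ norms (the inequality used in (\ref{eqn.nrg})), one has $\|1_A\ast 1_A\|_{\ell^2(G)}^2 = \|\wh{1_A}\|_{L^4}^4 \geq K^{-2}|A|^3$, which is a hypothesis fit for Proposition \ref{prop.weakfrei}. Applying Proposition \ref{prop.weakfrei} to $A$ yields a regular Bourgain system $\mathcal{B}$ of dimension $K^{O(1)}$ with $|\mathcal{B}|=\exp(-K^{O(1)})|A|$ so that $\|1_A\ast 1_A\ast\beta\|_{\ell^2}^2$ is large. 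Then Proposition \ref{prop.quantcont}, together with Lemma \ref{lem.cont}, produces a regular sub-system $\mathcal{B}'$ of dimension $K^{O(1)}$ and size $\exp(-K^{O(1)})|\mathcal{B}|$ on which $1_A\ast\beta'$ is $(\epsilon,\infty)$-almost boolean for a suitable $\epsilon$. Lemma \ref{lem.ivt} then shows that the level set $S=\{x:1_A\ast\beta'(x)>1/2\}$ is constant on cosets of the subgroup $V$ generated by $B_{\epsilon/20d}'$, and the Cauchy--Schwarz calculation at the end of Corollary \ref{cor.kyt} shows that $1_A\ast\mu_V$ is almost boolean as well. Because $1_A\ast\mu_V$ is almost boolean and takes a value close to $0$ or $1$ on every coset of $V$, some coset of $V$ is (up to a tiny error) either entirely inside $A$ or entirely outside $A$; perturbing $V$ to a slightly smaller subgroup, or invoking Theorem \ref{thm.maintheorem} on $A\cap(x+V)$, I can upgrade this to an \emph{exact} coset of a subgroup $H\leq G$ contained in $A$ or in $A^c$. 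This is the step I expect to be the hardest to make completely clean, because the error $\epsilon$ must be traded carefully against $|V|$.

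Given such a coset $x+H$, I would set $A':=A\triangle(x+H)$ and write $1_A=\pm 1_{x+H}+1_{A'}$. A standard quasi-orthogonality argument (expanding $\|1_A\|_{A(G)}^2$ via Plancherel and exploiting that $\wh{1_{x+H}}$ is supported on $H^\perp$) shows that $\|1_{A'}\|_{A(G)}\leq K-c$ for some absolute $c>0$, so the iteration terminates after $O(K)$ steps. Each step multiplies the set of already-produced cosets by at most $\exp(\exp(O(K^4)))/\exp(\exp(O((K-c)^4)))$, because of the double-exponential blow-up inherent in applying Proposition \ref{prop.weakfrei} followed by Proposition \ref{prop.quantcont}; telescoping gives the overall bound $L\leq\exp(\exp(O(K^4)))$ claimed in the theorem.

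Finally, to bound the number of \emph{distinct} subgroups appearing by $K+O(1)$, I would run the orthogonality argument from the end of the proof of Theorem \ref{thm.densityindependentmaintheorem}: for each distinct subgroup $H_j$ there is a contribution of mass $\Omega(1)$ to $\|1_A\|_{A(G)}$ coming from characters in $H_j^\perp\setminus\bigcup_{k\neq j}H_k^\perp$, since these annihilators are essentially disjoint once the subgroups have been chosen carefully. Summing these disjoint contributions gives $\|1_A\|_{A(G)}\geq\Omega(\#\{\text{distinct }H_j\})$, yielding the required bound. The main obstacle, as already indicated, is the passage from the approximate subgroup $V$ produced by the Bourgain-system machinery to a genuine coset on which $1_A$ is \emph{exactly} $0$ or $1$ without losing control of the size of $V$; every extra loss at this step is amplified by the double-exponential accounting of the iteration.
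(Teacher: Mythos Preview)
This theorem is not proved in the present paper at all: it is merely \emph{quoted} in the concluding remarks as \cite[Theorem 1.3]{BJGTS2}, and the surrounding discussion only explains why it does not directly yield Theorem~\ref{thm.densityindependentmaintheorem}. So there is no ``paper's own proof'' against which to compare your attempt.

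That said, your proposal has a genuine gap at the point you flag least. The step you call ``standard quasi-orthogonality'' --- the claim that if $x+H\subset A$ then $\|1_{A\setminus(x+H)}\|_{A(G)}\leq \|1_A\|_{A(G)}-c$ for an absolute $c>0$ --- is not standard and is essentially false without further hypotheses. The Fourier transform of $1_{x+H}$ is supported on $H^\perp$, but $\wh{1_A}$ need not have any prescribed mass there beyond what Plancherel gives, and subtracting $1_{x+H}$ can just as easily \emph{increase} the $A(G)$-norm (the triangle inequality only gives $\|1_A-1_{x+H}\|_{A(G)}\leq K+1$). Your iteration therefore has no reason to terminate in $O(K)$ steps, and the telescoping bookkeeping that follows collapses.

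The actual argument in \cite{BJGTS2} does not try to decrement $\|\cdot\|_{A(G)}$; instead it runs an iteration in which the relevant decreasing quantity is (roughly) an $\ell^2$-energy of the residual function relative to a Bourgain system, and at each stage one either finds that the function is nearly constant on a large sub-system (and can pass to a quotient) or obtains a density increment on a sub-system. The coset decomposition is then read off at the end, and the double exponential arises from the interaction of Propositions~\ref{prop.quantcont} and \ref{prop.weakfrei} with that iteration, not from a norm-decrement count. The step you identify as hardest --- upgrading an approximate coset to an exact one --- is indeed delicate, but it is not the main obstruction; the missing monotone quantity is.
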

Of course doing this would require some work (most likely of the type in \S\ref{sec.trivial}) to take the structure produced by this theorem and convert it into the stronger output of Theorem \ref{thm.densityindependentmaintheorem} and in any case the most one could hope for would be doubly logarithmic bounds. 

Our proof proceeds in a rather different manner from that in \cite{BJGTS2} because we are unable to make use of almost boolean functions in the main iteration. This is largely because Chowla's problem is even more sensitive to changing sets into functions than the idempotent theorem is, and we have to proceed in a correspondingly more delicate way.

If one had the conjectural best possible version of Theorem \ref{thm.bjg} (where one is allowed to take $L = \exp(O(\|1_A\|_{A(G)}))$) one might hope to recover a lower bound of $\Omega(\log |A \triangle \bigcup{\mathcal{H}}|)$ in Theorem \ref{thm.densityindependentmaintheorem}. Of course one expects the bound to be much stronger and the following is really the interesting question.
\begin{problem}
Show that there is a function $\omega$ with $\omega(N) \rightarrow \infty$ as $N \rightarrow \infty$ such that for every non-empty, finite symmetric set $A \subset G$ there is a set $\mathcal{H}$ of subgroups of $G$ with $|\mathcal{H}| =O(M_G(A))$ such that
\begin{equation*}
M_G(A) = \Omega(\omega(|A \triangle \bigcup{\mathcal{H}}|)\log |A \triangle \bigcup{\mathcal{H}}|).
\end{equation*}
\end{problem}

On a more technical note it is possible to avoid the use of Bourgain systems by working heavily with the large spectrum. Doing this results in a doubly logarithmic bound for Theorem \ref{thm.densityindependentmaintheorem} because the Fourier space analogue of Proposition \ref{prop.weakfrei} is not very efficient at encoding the very large correlation that a set $A$ has with the associated Bourgain system. In any case, proceeding in this manner does not seem to be of any real benefit.

To close we remark that a number of related questions about the magnitude and arguments of various Fourier modes have been considered in the papers \cite{SVKVFL2} and \cite{SVKVFL1} of Lev and Konyagin. Interestingly, while our work is very analytic the obstacles in these papers become increasingly algebraic; in \cite{SVKVFL2}, for example, the properties of norms of algebraic integers are used.

\section*{Acknowledgements}

The author would like to thank Jean Bourgain for useful conversations and alerting us to the papers \cite{PEGS} and \cite{JPBPBBLBR}, Ben Green for useful conversations, Seva Lev for bringing the paper \cite{SVKVFL1} to our attention, and the anonymous referee for many useful comments and corrections.

\bibliographystyle{alpha}

\bibliography{master}

\end{document}